\journal{...}
\newtheorem{theorem}{Theorem}[section]
\newtheorem{lemma}[theorem]{Lemma}
\newtheorem{example}[theorem]{Example}
\newtheorem{proposition}[theorem]{Proposition}
\newtheorem{remark}[theorem]{Remark}
\newtheorem{corollary}[theorem]{Corollary}
\newtheorem{definition}[theorem]{Definition}
\numberwithin{equation}{section}
\def\T{\mathbb{T}}
\def\R{\mathbb{R}}
\def\Z{\mathbb{Z}}
\def\N{\mathbb{N}}
\def\C{\mathbb{C}}
\def\Q{\mathbb{Q}}
\def\D{\mathcal{D}'}
\newenvironment{proof}[1][\noindent \textbf{Proof: }]{#1}{ \hfill $\square$ \vspace{2mm}}
\begin{document}

\begin{frontmatter}
	
	\title{Time-periodic Gelfand-Shilov spaces and global hypoellipticity on $\T\times \R^n$}


	\author[addressUFPR]{Fernando de \'Avila Silva}
	\ead{fernando.avila@ufpr.br}


\author[addressUNITO]
	{Marco Cappiello\corref{correspondingauthor}}
	\cortext[correspondingauthor]{Corresponding author}
	\ead{marco.cappiello@unito.it}

	\address[addressUFPR]{Department of Mathematics, Federal University of Paran\'a, Caixa Postal 19081, \\ CEP 81531-980, Curitiba, Brazil}

	\address[addressUNITO]{Department of Mathematics, University of Turin, Via Carlo Alberto 10, 10123, Turin, Italy}

	\begin{abstract}
We introduce a class of time-periodic Gelfand-Shilov spaces  of  functions on $\T \times \R^n$, where $\T \sim\mathbb{\R} /2\pi \Z$ is the one-dimensional torus. We develop a Fourier analysis inspired by the characterization of the Gelfand-Shilov spaces in terms of the eigenfunction expansions given by a fixed normal, globally elliptic differential operator on $\R^n$. In this setting, as an application, we characterize the global hypoellipticity for a class of linear differential evolution operators on $\T \times \R^n$.
	\end{abstract}
	
	\begin{keyword}
		Gelfand-Shilov spaces, periodic equations, global hypoellipticity, Fourier analysis
		\MSC[2020] Primary 46F05, 35B10, 35B65, 35H10
	\end{keyword}

\end{frontmatter}
\vskip0.5cm
This paper is dedicated to our friend and colleague Todor V. Gramchev.

\section{Introduction}
Gelfand-Shilov spaces of type $\mathscr{S}$ have been first introduced in \cite{GS2} as an alternative functional  setting to the Schwartz space $\mathscr{S}(\R^n)$ of smooth rapidly decreasing functions for the study of partial differential equations, cf. \cite{GS3}. Namely, fixed $ \mu>0, \nu >0$ such that $\mu+\nu \geq 1$, the space $\mathcal{S}^\mu_\nu(\R^n)$ is defined as the space of all functions $f \in C^\infty(\R^n)$ satisfying the following estimate
\begin{equation}\label{GSdef}
	\sup_{\alpha, \beta \in \N^n} \sup_{x \in \R^n} A^{-|\alpha+\beta|} \alpha!^{-\nu}\beta!^{-\mu}
	|x^\alpha \partial_x^\beta f(x)| <+\infty \end{equation}
for some $A>0$, or equivalently,
\begin{equation}\label{GSdef2}
	\sup_{\beta \in \N^n} \sup_{x \in \R^n} C^{-|\beta|} \beta!^{-\mu}\exp( c|x|^{1/\nu})
	| \partial_x^\beta f(x)| <+\infty \end{equation}
for some $C,c>0$.
Elements of $\mathcal{S}^\mu_\nu(\R^n)$ are then smooth functions presenting uniform analytic or Gevrey estimates on $\R^n$ and admitting an exponential decay at infinity. The elements of the dual space $(\mathcal{S}^\mu_\nu)'(\R^n)$ are commonly known as \textit{temperate ultradistributions}, cf. \cite{Pil}. Functional properties of these spaces have been studied along the years by several authors, cf. \cite{Av, CCK, MIT, Pil}.  
In the last twenty years, these spaces and their generalizations have been rediscovered as a suitable functional setting for Fourier, microlocal and time-frequency analysis, see \cite{ACT, AsensioLopez, BJO,  CapRend,  CGR3, CR, CT, GZ, Prangoski}. In particular, several classes of pseudodifferential operators have been studied in the Gelfand-Shilov setting and this led to a big number of applications and results concerning elliptic and evolution partial differential equations, see \cite{ACJDE, ACJMPA, CGR1, CGR2, CGR3, CaWa, CoNiRo1, CoNiRo2}. Some of these results concern global hypoellipticity for Shubin and SG pseudodifferential operators, see \cite{CGR1, CGR2, CGR3}. Recently, these spaces have been also characterized in terms of eigenfunction expansions, see \cite{CGPR, GPR}. 

The main goal of the paper is to introduce a new class of \textit{time-periodic} Gelfand-Shilov spaces and to extend to them the characterization given in \cite{GPR} for classical Gelfand-Shilov spaces. We emphasize that many  problems in analysis are connected with  periodic differential equations, as the models in biology, physics,  and engineering. Furthermore, a number of these problems consider classes of generalized functions and distributions.  In particular, a great advantage in the periodic analysis is a possible discretization of the involved equations in terms of Fourier series. In the present paper, the motivation and the application of Fourier expansions will concern global hypoellipticity for a class of linear evolution operators. We shall restrict for simplicity to the symmetric case $\mu=\nu$. Before treating the general case we need to find a characterization of the non-symmetric Gelfand-Shilov spaces via eigenfunction expansions which at this moment exists only in particular cases, cf. \cite{CGPR} for details.

For an outline of our main results and techniques, let 
$\T \sim\mathbb{\R} /2\pi \Z$ be the one-dimensional torus and consider $\sigma \geq 1$ and $\mu\geq 1/2$ be fixed. We are interested in the study of smooth, complex-valued functions $u$ defined on $\T \times \R^n$ satisfying the following: there exist $R,C >0$ such that
\begin{equation}\label{def-periodic-gelf-intro}
	\sup_{t \in \T, x \in \R^n} |x^\alpha \partial_x^\beta \partial_t^k u(t,x)| \leq RC^{|\alpha+\beta|+k}(k!)^\sigma (\alpha!\beta!)^\mu
\end{equation}
for every $\alpha, \beta \in \N^n_0, k \in \N_0$.  Broadly speaking, this is the class of functions that  belong to the symmetric Gelfand-Shilov spaces $\mathcal{S}_{\mu}^{\mu} (\R^n)$ with respect to the variable $x$, while are Gevrey regular and periodic in $t$.

We aim to characterize the spaces of functions satisfying \eqref{def-periodic-gelf-intro} by using  a discretization approach based on the Fourier expansions on  $\R^n$ introduced in \cite{GPR}. (See also \cite{CGPR}). Precisely, given an elliptic operator $P(x,D_x)$ on $\R^n$, satisfying suitable assumptions,  we use its orthonormal basis of eigenfunctions $\{\varphi_j(x)\}_{j \in \N}$ to obtain a Fourier series 
\begin{equation}\label{fourier-f-intro}
	u(t,x) = \sum_{j \in \N} u_j(t) \varphi_j(x),
\end{equation}
for $u$ as in \eqref{def-periodic-gelf-intro} and $u_j(t)$ a sequence of periodic functions.

It is important to emphasize that a similar approach is presented in the paper \cite{AvGrKi} where the authors characterize the functional spaces $C^{\infty}(\T \times M)$ and $\D(\T \times M)$, $M$ being a compact manifold, in terms of Fourier expansions generated by an elliptic operator on $M$.  In particular, they study the $C^{\infty}$-global hypoellipticity  for operators of type $\mathscr{L} = D_t + C(t,x,D_x)$,  where  $C(t,x,D_x)$ is a pseudo-differential operator on $M$, smooth in $t$, (see also \cite{AvKi}).
We recall that Hounie presented in \cite{Hou79} the study of global properties of the abstract operator  $\mathcal{L} = D_t + b(t) A + r(t,A)$,  where   $A$ is a linear self-adjoint operator, densely defined in a separable complex Hilbert space $H$ which is unbounded, positive, and has eigenvalues diverging to $+\infty$, and $r(t,A)$ is a lower order term in a suitable sense. It was observed by Hounie the remarkable fact that the structure of the spectrum of $A$ does not  interfere on the solvability and hypoellipticity in case where $t$ belongs to some interval in $\R$, while its  behavior  in a neighborhood of infinity plays an important role when $t \in \T$.

In this paper, as an application of definition \eqref{def-periodic-gelf-intro}   and expansion \eqref{fourier-f-intro} we present the study of \textit{global hypoellipticity} (see Definition \ref{def-GH-2}) for the operator 
\begin{equation}\label{op-intro}
	L= D_t + c(t)P(x,D_x), \  D_t = i^{-1}\partial_t, 
\end{equation}
where $c=c(t)$ belongs to some Gevrey class on $\T$ and $P(x, D_x)$ is a normal differential operator satisfying a suitable global ellipticity condition. Namely,  expansions \eqref{fourier-f-intro} lead us to study an equation $Lu=f$ by means of a discretization process, namely, in terms of the sequence of ordinary differential equations
$$
D_tu_j(t) + \lambda_j c(t) u_j(t) = f_j(t), \ t \in \T, \ j \in \N,
$$
where $\{\lambda_j\}$ is the sequence of eigenvalues of $P(x,D_x)$.
Hence, the regularity of solution $u$ is studied by analyzing the behavior  of the sequence $u_j(t)$ (and all its derivatives), as $j\to \infty$, cf. Theorem \ref{charGSspaces} below.

The work is organized as follows: in Section \ref{sec-periodic-gelf}, we introduce  the \textit{time-periodic Gelfand-Shilov space} $\mathcal{S}_{\sigma, \mu}(\mathbb{T}\times \R^n)$, its dual 
$\mathcal{S}_{\sigma, \mu}'(\mathbb{T}\times \R^n)$, and the corresponding topologies.  In Subsection \ref{sec-eign-func-exps} we explore properties of these spaces in terms of a Fourier expansion on the variable $x$ generated by $\{\varphi_j(x)\}_{j \in \N}$. In particular, in Proposition \ref{propequivPM}, we obtain a characterization for $\mathcal{S}_{\sigma, \mu}(\mathbb{T}\times \R^n)$ in terms of powers of the elliptic operator $P$. In turn, with Theorems \ref{charGSspaces} and \ref{theorm-seq}  the exact meaning of the expression \eqref{fourier-f-intro}  is formalized.
In Section \ref{sec-hypoellipticity}, we present a complete analysis of the global hypoellipticity for the operator \eqref{op-intro}. The starting point is an approach for the time independent coefficients operator
$D_t + (\alpha+i \beta)P(x,D_x)$. In Theorem  \ref{main_const} we obtain that the hypoellipticity is connected with  \textit{Diophantine type phenomena}.
We observe that the presence of  Diophantine approximations 
in the investigations for global hypoellipticity (and global solvability) has been observed in \cite{AvGrKi,Hou79}  and is also  widely explored in the context of operators on the torus, as the reader can see in \cite{BDG18,AGKM,DGY97,GW1,GW3,HiPetr00,PETRONILHO2005} and the references therein.
In Subsection \ref{sec-variable_coeff}, the case when the coefficients depend on $t$ is studied and the  main result is Theorem \ref{main-variable} which states the following:  $L$  is globally hypoelliptic if and only if either $\Im c(\cdot)$ does not change sign, or 
$\Im c(t) \equiv 0$ and 
$$
\kappa = (2\pi)^{-1} \int_{0}^{2\pi} \Re c(t)dt
$$
is not a $\mu$-exponential Liouville number with respect to the sequence $\{\lambda_{j}\}$, as defined in \eqref{non-exp-Liou}. 

We emphasize that Theorem \ref{main-variable}
extends part of the studies in \cite{AvGrKi} to the non-compact case 
$\T \times \R^n$. It is also an improvement for analysis presented in
\cite{Hou79} in the following sense: the global hypoellipticity 
in that work  is given in terms of Sobolev scales generated by the elliptic operator. However, since we are modeling our work on the Gelfand-Shilov setting we are not only studying the regularity of solutions, but also their decay at infinity.  Moreover, with this approach it is possible to measure the loss of regularity for the solutions of equation $Lu=f$, as observed in Remarks \ref{remark-lost-constant} and \ref{remark-loss-variable}.


\section{The spaces  $\mathcal{S}_{\sigma, \mu}(\mathbb{T}\times \R^n)$ and $\mathcal{S}_{\sigma, \mu}'(\mathbb{T}\times \R^n)$ \label{sec-periodic-gelf}}

In this section, we introduce the  spaces under investigation. Let  $\sigma \geq 1$ and $\mu\geq 1/2$ be fixed. Given a constant $C>0$ we define by  $\mathcal{S}_{\sigma,\mu,C} (\mathbb{T}\times \R^n)$ the space of all smooth functions on $\mathbb{T}\times \R^n$ such that 
$$
\|\varphi\|_{\sigma,\mu,C} := \sup_{\alpha, \beta \in \N^n, j \in \N}C^{-|\alpha+\beta|-j}j!^{-\sigma} (\alpha!\beta!)^{-\mu}\sup_{(t,x) \in \mathbb{T}\times \R^n} |x^\alpha \partial_x^\beta \partial_t^j u(t,x)| <+\infty.
$$

It is easy to verify that the space
$\mathcal{S}_{\sigma,\mu,C} (\mathbb{T}\times \R^n)$ is a Banach space endowed with the norm $\|\varphi\|_{\sigma,\mu,C}$.
Therefore, we can define
\begin{equation}\label{def-S}
\mathcal{S}_{\sigma,\mu} (\mathbb{T}\times \R^n) = \bigcup_{C> 0}
\mathcal{S}_{\sigma,\mu,C} (\mathbb{T}\times \R^n),
\end{equation}
and equip it with the inductive limit topology:  
$$
\mathcal{S}_{\sigma,\mu} (\mathbb{T}\times \R^n)  =\displaystyle \underset{C\rightarrow +\infty}{\mbox{ind} \lim} \;\mathcal{S}_{\sigma,\mu,C}(\mathbb{T}\times \R^n).
$$
%
In particular, a sequence $\{\varphi_j\}_{j \in\N}$ of elements of
$\mathcal{S}_{\sigma,\mu} (\mathbb{T}\times \R^n)$ converges to $\varphi$ in $\mathcal{S}_{\sigma,\mu}  (\mathbb{T}\times \R^n)$ if and only if there exists $C>0$ such that $\varphi,\varphi_j \in \mathcal{S}_{\mu,\sigma, C }(\mathbb{T}\times \R^n)$ for all $j \in \N$ and
$$
\|\varphi_j - \varphi\|_{\sigma, \mu, C} \to 0, \ \textrm{ as } \ j \to \infty. 
$$

\begin{definition}\label{dualspace} Given $\mu \geq 1/2, \sigma \geq 1$, we denote by 
$\mathcal{S}_{\sigma, \mu}'(\mathbb{T}\times \R^n)$ the space of all linear forms $u: \mathcal{S}_{\sigma, \mu} (\mathbb{T}\times \R^n) \to \C$ satisfying the following condition: for every $A>0$ there exists $C=C(A)$ such that
\begin{equation}\label{def-ultra}
	|\langle u , \varphi\rangle| \leq C \sup_{\alpha, \beta, k}A^{-|\alpha+\beta|-k} k!^{-\sigma}(\alpha! \beta!)^{-\mu}\sup_{\mathbb{T}\times \R^n} |x^\alpha \partial_x^\beta \partial_t^k \varphi(t,x)|
\end{equation}
for every $\varphi \in \mathcal{S}_{\sigma, \mu} (\mathbb{T}\times \R^n)$. \end{definition}

\begin{remark}
In order to not overload our notations we may write  $\mathcal{S}_{\sigma,\mu,C}$, $\mathcal{S}_{\sigma, \mu}$ and $\mathcal{S}_{\sigma, \mu}'$, that is, omitting $\T\times\R^n$ in the notation.
\end{remark}

Now, let us recall the standard characterization of the Gevrey classes on the torus $\T$. Given $h>0$ and $\sigma \geq 1$, we denote by $\mathcal{G}^{\sigma,h}(\T)$  the space of all smooth functions $\varphi \in C^{\infty}(\T)$ such that there exists $C>0$ for which 
\begin{equation}\label{def-gsh}
	\sup_{t \in \T} |\partial^{k}\varphi(t)| \leq C h^{k}(k!)^{\sigma}, \ \forall k \in \Z_+.
\end{equation}

The space $\mathcal{G}^{\sigma,h}(\T)$ is a Banach space endowed with the norm
\begin{equation}\label{def-ngsh}
	\|\varphi\|_{\sigma,h} = \sup_{k \in \Z_+}\left \{\sup_{t \in \T} |\partial^{k}\varphi(t)|  h^{-k}(k!)^{-\sigma}\right\},
\end{equation}
thus the space of periodic Gevrey functions of order $\sigma$ is defined by
$$
\mathcal{G}^{\sigma}(\T) =\displaystyle \underset{h\rightarrow +\infty}{\mbox{ind} \lim} \;\mathcal{G}^{\sigma, h} (\mathbb{T}).
$$

The dual space $(\mathcal{G}^{\sigma})'(\T)$ is the set of all linear maps $u: \mathcal{G}^{\sigma}(\T) \to \C$ satisfying the following: for every $h>0$ there exists $C_h>0$ such that
\begin{equation}\label{de-dual-s-1}
	|\langle u , \psi \rangle| \leq C_h
	\sup_{t\in \T, k \in \Z_+} h^{-k} (k!)^{-\sigma}
	|\partial^k \psi(t)|, \ \forall \psi \in \mathcal{G}^{\sigma}(\T),
\end{equation}
or equivalently, for any $f \in \mathcal{G}^{\sigma}(\T)$ we obtain for every $h>0$ a positive constant $C_h$ such that
\begin{equation}\label{de-dual-s-2}
	|\langle u , f\rangle| \leq C_h \|f\|_{\sigma, h}.
\end{equation}


\subsection{Fourier expansion for the spaces $\mathcal{S}_{\sigma, \mu}(\mathbb{T}\times \R^n)$ and $\mathcal{S}'_{\sigma, \mu}(\mathbb{T}\times \R^n)$}
\label{sec-eign-func-exps}

Motivated by the approach presented in \cite{GPR}, fixed $m \geq 2$, we consider a partial differential operator  $P$ of the form
\begin{equation}\label{P-intro}
	P = P(x,D) = \sum_{|\alpha| + |\beta| \leq m} c_{\alpha,\beta} x^{\beta} D^{\alpha}_x, \ D^{\alpha}_x = (-i)^{|\alpha|}\partial_x^{\alpha},
\end{equation}
with $c_{\alpha,\beta} \in \C,$ satisfying the normal condition $P^*P = PP^*$ and the global ellipticity property 
\begin{equation}\label{P-elliptic}
	p_m(x,\xi) = \sum_{|\alpha| + |\beta| = m} c_{\alpha,\beta} x^{\beta} \xi^{\alpha} \neq 0, \quad (x,\xi) \neq (0,0).
\end{equation}

Notice that for $m=1$ the condition \eqref{P-elliptic} is never satisfied by an operator of the form \eqref{P-intro}. This justifies the assumption $m \geq 2$ above. Under these conditions there exists an orthonormal basis of eigenfunctions $\{\varphi_j\}_{j \in \N} \subset \mathcal{S}^{1/2}_{1/2}(\R^n)$ with real eigenvalues $\lambda_j$ such that $|\lambda_j| \to \infty$. Moreover, we have the Weyl asymptotic formula
\begin{equation}\label{weyl}
	|\lambda_j| \sim C j^{\, m/2n}, \ \textrm{ as } \ j \to \infty,
\end{equation}
for some positive constant $C$. In particular, any $u \in L^2(\R^n)$ (or $\mathscr{S}' (\R^n)$) can be expanded as
$$
u = \sum_{j \in \N} u_j \varphi_j,
$$
where $u_j = (u , \varphi_j )_{L^2(\R^n)}, \ j \in \N$
(or $u_j = \langle u , \varphi_j\rangle$).

Moreover, it follows from Theorem 1.2 in \cite{GPR} that $u \in \mathcal{S}^{\mu}_{\mu}(\R^n)$,  $\mu\geq 1/2$, if and only if there exists $\epsilon>0$ such that
$$
\sum_{j \in \N} |u_j|^2 \exp(\epsilon |\lambda_j|^{\frac{1}{m\mu}}) < \infty,
$$
or equivalently, there exist $\epsilon,C>0$ such that
$$
|u_j| \leq C \exp(-\epsilon j^{\frac{1}{2n\mu}}), \ j \in \N.
$$

In the sequel we present an extensive characterization of $\mathcal{S}_{\sigma,\mu}(\mathbb{T}\times \R^n)$ and 
$\mathcal{S}_{\sigma,\mu}'(\mathbb{T}\times \R^n)$ in terms of the Fourier analysis generated by $\{\varphi_j \}$. First of all it is easy to show that
$f \in \mathcal{S}_{\sigma,\mu} (\mathbb{T}\times \R^n)$ if and only if there exist $A,C >0$ such that
\begin{equation}\label{equivL^2}
	\sup_{t \in \mathbb{T}}\sum_{|\alpha+\beta| =s} \| x^\beta D_x^\alpha \partial_t^k f \|_{L^2(\R^n_x)} \leq CA^{s+k} s!^\mu k!^\sigma
\end{equation}
for every $s,k \in \N$.

\begin{proposition}\label{propequivPM}
	Let $P$ be an operator of order $m$ of the form \eqref{P-intro} satisfying \eqref{P-elliptic}. Let $\mu \geq 1/2$ and $u \in \mathcal{S}_{\sigma,\mu}' (\mathbb{T}\times \R^n)$. Then $u \in \mathcal{S}_{\sigma,\mu} (\mathbb{T}\times \R^n)$ if and only if
	there exists $C>0$ such that
	\begin{equation}\label{equivPM}
		\| P^M \partial_t^k u (t, \cdot) \|_{L^2(\R^n_x)} \leq C^{M+k+1}(M!)^{\mu m}k!^\sigma 
	\end{equation}
	for every $k, M \in \N$.
\end{proposition}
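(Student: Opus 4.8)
The plan is to show that condition \eqref{equivPM} is equivalent to the $L^2$-characterization \eqref{equivL^2} of membership in $\mathcal{S}_{\sigma,\mu}$ already recorded above, and to carry out this equivalence by diagonalizing $P$ in its eigenbasis $\{\varphi_j\}$. For each fixed $t$ I set $u_j(t) = (u(t,\cdot),\varphi_j)_{L^2(\R^n_x)}$ (the distributional pairing when working with $u \in \mathcal{S}_{\sigma,\mu}'$). Since $P\varphi_j = \lambda_j\varphi_j$ and $\{\varphi_j\}$ is orthonormal in $L^2(\R^n)$, we have $P^M\partial_t^k u(t,\cdot) = \sum_{j} \lambda_j^M \partial_t^k u_j(t)\,\varphi_j$, so Parseval's identity gives $\|P^M\partial_t^k u(t,\cdot)\|_{L^2(\R^n_x)}^2 = \sum_{j} |\lambda_j|^{2M}\,|\partial_t^k u_j(t)|^2$. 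Thus \eqref{equivPM} is precisely a bound, uniform in $t$, on this $\lambda_j$-weighted sum of the time-derivatives of the coefficients. I would then reduce both implications to the quantitative form of the eigenfunction characterization recalled from \cite{GPR}: a function $w$ lies in $\mathcal{S}^\mu_\mu(\R^n)$ if and only if its coefficients satisfy $|(w,\varphi_j)_{L^2}| \leq C\exp(-\epsilon|\lambda_j|^{1/(m\mu)})$, with the decay constants and the Gelfand–Shilov seminorms controlling each other.

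For the direction $u \in \mathcal{S}_{\sigma,\mu} \Rightarrow$ \eqref{equivPM}: membership implies, through \eqref{equivL^2}, that for each $t$ the function $\partial_t^k u(t,\cdot)$ belongs to $\mathcal{S}^\mu_\mu(\R^n)$ with seminorm bounded by $CC^k k!^\sigma$ \emph{uniformly} in $t$. By \cite{GPR} this yields the coefficient estimate $|\partial_t^k u_j(t)| \leq C\,C^k k!^\sigma \exp(-\epsilon|\lambda_j|^{1/(m\mu)})$. Inserting this into the Parseval identity, splitting the exponent and factoring out the elementary bound $\sup_{r\geq 0} r^{2M}\exp(-\epsilon r^{1/(m\mu)}) \leq C^M (M!)^{2\mu m}$, the remaining series $\sum_j \exp(-\epsilon|\lambda_j|^{1/(m\mu)})$ converges because the Weyl asymptotics \eqref{weyl} give $|\lambda_j|^{1/(m\mu)} \sim c\,j^{1/(2n\mu)}$. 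Taking square roots produces exactly the bound \eqref{equivPM}.

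For the converse $\eqref{equivPM} \Rightarrow u \in \mathcal{S}_{\sigma,\mu}$: the Parseval identity together with \eqref{equivPM} bounds each individual term, giving $|\partial_t^k u_j(t)| \leq C^{M+k+1}(M!)^{\mu m} k!^\sigma |\lambda_j|^{-M}$ for every $M \in \N$. Minimizing the right-hand side over $M$ (the dual optimization $\inf_{M} C^M (M!)^{\mu m} R^{-M} \leq C'\exp(-\epsilon R^{1/(m\mu)})$) produces the uniform decay $|\partial_t^k u_j(t)| \leq C'\,C^k k!^\sigma \exp(-\epsilon|\lambda_j|^{1/(m\mu)})$, and by the converse half of the characterization in \cite{GPR} this means $\partial_t^k u(t,\cdot) \in \mathcal{S}^\mu_\mu(\R^n)$ with seminorm $\leq C''C^k k!^\sigma$ uniformly in $t$, which is precisely \eqref{equivL^2}; hence $u \in \mathcal{S}_{\sigma,\mu}$. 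I expect the main obstacle to lie not in these estimates — which collapse to the two standard factorial/exponential optimizations above and the summability coming from \eqref{weyl} — but in the regularity bookkeeping of this converse: starting only from $u \in \mathcal{S}_{\sigma,\mu}'$, one must justify that the coefficients $u_j(t)$ are genuinely smooth in $t$, that $\partial_t^k$ commutes with the eigenfunction summation, and that the Parseval identity is valid for $\partial_t^k u(t,\cdot)$. This is a bootstrapping argument fed by the finiteness assumed in \eqref{equivPM}, and the delicate point is to keep every constant independent of $t \in \T$ throughout.
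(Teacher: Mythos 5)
Your argument is correct in outline, but it follows a genuinely different route from the paper's. The paper never touches the eigenfunction expansion in this proposition: for the forward direction it simply writes $P^M=\sum_{|\alpha+\beta|\le mM}c'_{\alpha,\beta}x^\beta D_x^\alpha$ with $\sup|c'_{\alpha,\beta}|\le C^M$ and feeds this into \eqref{equivL^2}, which is more elementary than your detour through coefficient decay; for the converse it works directly with the seminorms $|\partial_t^k u(t,\cdot)|_s$ of \eqref{s}, using the interpolation inequality of \cite[Proposition 4.1]{GPR} to reduce to $s=pm$ and the iteration estimate of \cite[Lemma 2.4]{CGPR} to convert the bounds on $\|P^\ell\partial_t^k u(t,\cdot)\|_{L^2}$ into bounds on $|\partial_t^k u(t,\cdot)|_{pm}$. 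You instead diagonalize $P$, use Parseval, and reduce both implications to the quantitative form of the eigenfunction characterization of $\mathcal{S}^\mu_\mu(\R^n)$; in effect you prove Proposition \ref{propequivPM} and Theorem \ref{charGSspaces} simultaneously (the paper derives the latter from the former by exactly your Parseval-plus-optimization steps). The one point you should make explicit is that Theorem 1.2 of \cite{GPR} is quoted only qualitatively: your converse needs that a coefficient bound $|w_j|\le R\exp(-\epsilon j^{1/(2n\mu)})$ yields membership in a \emph{fixed} Banach step $\mathcal{S}^\mu_{\mu,A}$ with $A=A(\epsilon)$ and norm $\le C(\epsilon)R$, uniformly over $w$, so that the constants survive the supremum over $t\in\T$ and the $k!^\sigma$ bookkeeping. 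This uniformity is true (it follows by homogeneity plus inspection of the proof in \cite{GPR}, or abstractly by Grothendieck factorization into the inductive limit), but it is precisely the content that the paper's seminorm argument establishes by hand, so citing it as a black box hides rather than removes the analytic work. Your worry about commuting $\partial_t^k$ with the eigenfunction sum can be sidestepped by applying the $\R^n$ characterization to the fixed function $w=\partial_t^k u(t,\cdot)$ for each $(t,k)$; the remaining regularity issue (that $\partial_t^k u(t,\cdot)\in L^2$ so that \eqref{equivPM} and Parseval make sense) is implicit in the statement and is glossed over equally quickly by the paper.
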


\begin{proof}
	The proof is a variant of the proof of the analogous characterization for the space $\mathcal{S}^{\mu}_\mu (\R^n)$ given in \cite[Lemma 3.1]{GPR} so we just sketch it. Assume that $ u \in \mathcal{S}_{\sigma,\mu} (\mathbb{T}\times \R^n)$. Then using \eqref{equivL^2} and observing that for every $M \in \N$ the iterated operator $P^M $ is of the form 
	$$P^M = \sum_{|\alpha+\beta| \leq mM} c'_{\alpha, \beta} x^\beta D_x^\alpha$$ for some $c'_{\alpha, \beta }\in \C$ such that
	$\sup_{|\alpha+\beta|\leq mM}|c'_{\alpha,\beta}| \leq C^M$ for some positive constant $C$ independent of $M$, we get
	\begin{eqnarray*} \|P^M \partial_t^k u(t, \cdot) \|_{L^2(\R^n_x)} &\leq& \sum_{s=0}^{mM}\sum_{|\alpha+\beta| =s} |c'_{\alpha, \beta}|  \|x^\beta D_x^\alpha \partial_t^k u (t, \cdot) \|_{L^2(\R^n_x)} \\
		&\leq& C^M \sum_{s=0}^{mM} CA^{s+k} s!^\mu k!^\sigma \\ &\leq& C_1 A_1^{M+k} (M!)^{\mu m} k!^\sigma,
	\end{eqnarray*}
	hence we obtain \eqref{equivPM}.

	Viceversa, assume that \eqref{equivPM} holds for every $M,k \in \N$. Then in particular $\partial_t^k u(t , \cdot) \in \mathscr{S}(\R^n)$ for every fixed $t \in \mathbb{T}$ and for every $k \in \N.$ Denote 
	\begin{equation}\label{s}| \partial_t^k u(t, \cdot) |_s := \sum_{|\alpha+\beta| =s} \| x^\beta D_x^\alpha \partial_t^k  u (t, \cdot) \|_{L^2(\R^n_x)}.
	\end{equation}
	To conclude it is sufficient to show that there exists $C>0$ such that
	\begin{equation}\label{stimas}
		\sup_{t \in \mathbb{T}}| \partial_t^k u(t, \cdot) |_s \leq C^{k+s+1}s!^{\mu}k!^{\sigma} \qquad \forall s \in \N
	\end{equation}

	Now, by  \cite[Proposition 4.1]{GPR} there exists $C>0$ such that for every $s \in \N$ with $s=pm+r, p \in \N, 0<r<m$ and for all $\varepsilon >0$
	
	\begin{equation}
		|v|_s \leq \varepsilon | v|_{(p+1)m}+C \varepsilon^{-\frac{r}{m-r}}|v|_{pm}+C^s (s!)^{1/2}\| v \|_{L^2}
	\end{equation}
	for every $v \in \mathscr{S}(\R^n)$. This means that it is sufficient to prove \eqref{stimas} for $v= \partial_t^k u $ and for the integers $s=pm$. Fixed $\lambda >0, k \in \N$ we denote
	$$\sigma_{p}(\partial^k_t u,\lambda)= (pm)!^{-\mu} \lambda^{-p} |\partial^k_t u|_{pm}, \qquad p \in \N.$$ We observe that $\sigma_0(\partial^k_t u,\lambda)= \| \partial_t^k u\|_{L^2(\R^n_x)}$ which does not depend on $\lambda.$
	By \cite[Lemma 2.4]{CGPR}, there exists $\lambda_0$ such that for every $\lambda \geq \lambda_0$ we have
	\begin{eqnarray*}\sigma_{p}(\partial_t^k u,\lambda) &\leq& 2^p \sigma_0( \partial_t^k u, \lambda)+\sum_{\ell=1}^p 2^{p-\ell}\binom{p}{\ell}(\ell!)^{-m\mu}\sigma_0(P^\ell \partial_t^k u, \lambda).
	\end{eqnarray*}
	Using \eqref{equivPM} we obtain
	$$\sigma_{p}(\partial^k_t u,\lambda) \leq C^{k+1}k!^\sigma 2^p \left(1+\sum_{\ell=1}^p 2^{-\ell} \binom{p}{\ell}C^{\ell}\right) \leq C_1^{k+p+1} k!^\sigma.$$
	Therefore
	$$|\partial_t^k u|_{pm} \leq C_2^{p+k+1} (pm)!^{\mu}k!^\sigma$$ for some constant $C_2>0$ independent of $p$ and $k$, that is the estimate \eqref{stimas} for $s=pm.$ This concludes the proof.
	
\end{proof}

\begin{theorem} \label{charGSspaces}
	Let $\mu\geq 1/2$ and $\sigma \geq 1$ and let $u \in \mathcal{S}_{\sigma,\mu}' (\mathbb{T}\times \R^n)$. Then $u \in \mathcal{S}_{\sigma,\mu}(\mathbb{T}\times \R^n)$ if and only if 
	it can be represented as 
	\begin{equation}\label{fourier-f}
		u(t,x) = \sum_{j \in \N} u_j(t) \varphi_j(x)
	\end{equation}
	for some $\{u_{j}\}_{j \in \N} \in \mathcal{G}^{\sigma}(\T)$ satisfying the following condition:
	there exist $C >0$ and $\epsilon>0$ such that
	\begin{equation} \label{deccoeff}
		\sup_{t \in \T} | \partial_t^k u_j(t)| \leq
		C^{k+1} (k!)^{\sigma} \exp \left[-\epsilon j^{\frac{1}{2n\mu}} \right] \ \forall j,k \in \N.
	\end{equation}
\end{theorem}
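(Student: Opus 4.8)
The plan is to transfer the whole problem to the Fourier side generated by $\{\varphi_j\}$ and to lean on Proposition \ref{propequivPM}, which already encodes membership in $\mathcal{S}_{\sigma,\mu}$ through the $L^2$-estimates \eqref{equivPM}. Writing $u_j(t) = (u(t,\cdot),\varphi_j)_{L^2(\R^n_x)}$, I would first observe that, since $P$ is normal with real eigenvalues, one has $P^M\varphi_j = \lambda_j^M\varphi_j$ and $(P^*)^M\varphi_j = \lambda_j^M\varphi_j$, so that $\partial_t^k u_j(t) = (\partial_t^k u(t,\cdot),\varphi_j)$ is the $j$-th Fourier coefficient of $\partial_t^k u(t,\cdot)$ and $P^M\partial_t^k u(t,\cdot)$ has coefficients $\lambda_j^M\partial_t^k u_j(t)$. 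By Parseval's identity this gives the pivotal relation
\begin{equation*}
\|P^M\partial_t^k u(t,\cdot)\|_{L^2(\R^n_x)}^2 = \sum_{j\in\N} |\lambda_j|^{2M}\,|\partial_t^k u_j(t)|^2,
\end{equation*}
valid for every $t\in\T$, which converts \eqref{equivPM} into a weighted summability condition on the scalar functions $\partial_t^k u_j$.

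For the direct implication, assume $u\in\mathcal{S}_{\sigma,\mu}$. Proposition \ref{propequivPM} together with the identity above yields, uniformly in $t$, $\sum_j |\lambda_j|^{2M}|\partial_t^k u_j(t)|^2 \le C^{2(M+k+1)}(M!)^{2\mu m}k!^{2\sigma}$. Retaining a single index $j$ I obtain $|\lambda_j|^M\sup_t|\partial_t^k u_j(t)| \le C^{M+k+1}(M!)^{\mu m}k!^\sigma$ for every $M\in\N$. Taking the infimum over $M$ and using the elementary equivalence $\inf_{M}(M!)^{s}R^{-M}\asymp \exp(-s\,R^{1/s})$ (here with $s=\mu m$ and $R=|\lambda_j|/C$) produces $\sup_t|\partial_t^k u_j(t)| \le C^{k+1}k!^\sigma\exp(-\epsilon|\lambda_j|^{1/(m\mu)})$; the Weyl asymptotics \eqref{weyl} then turn $|\lambda_j|^{1/(m\mu)}$ into $\epsilon' j^{1/(2n\mu)}$, which is exactly \eqref{deccoeff}. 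In particular the factor $C^{k+1}k!^\sigma$ shows that each $u_j$ lies in $\mathcal{G}^\sigma(\T)$, cf. \eqref{def-gsh}.

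For the converse I would start from \eqref{deccoeff} and run the estimates backwards. Inserting \eqref{deccoeff} into the Parseval relation gives $\|P^M\partial_t^k u(t,\cdot)\|_{L^2}^2 \le C^{2(k+1)}k!^{2\sigma}\sum_j |\lambda_j|^{2M}\exp(-2\epsilon j^{1/(2n\mu)})$. To control the series I split one exponential factor, bounding $\sup_j\big(|\lambda_j|^{2M}\exp(-\epsilon j^{1/(2n\mu)})\big) \le C^{2M}(M!)^{2\mu m}$ by the same optimization as above (now in the continuous form $\sup_\lambda \lambda^{2M}e^{-c\lambda^{1/(m\mu)}}$ together with \eqref{weyl}), while the remaining factor $\sum_j \exp(-\epsilon j^{1/(2n\mu)})$ is a convergent constant. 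This yields \eqref{equivPM}, whence Proposition \ref{propequivPM} gives $u\in\mathcal{S}_{\sigma,\mu}$; the rapid decay in $j$ simultaneously guarantees that the series \eqref{fourier-f} converges in $\mathcal{S}_{\sigma,\mu}$ and represents $u$.

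The technical core, and the step I expect to be most delicate, is the two-sided passage between the family of $L^2$-bounds with factorial growth in $M$ and the single exponential decay in $j$, i.e. the scalar equivalence $\sup_{M}R^M/(M!)^{s}\asymp \exp(s\,R^{1/s})$, which must be applied with constants uniform in $t$ and carrying the correct $k!^\sigma$ dependence throughout. This is precisely the discretized counterpart, applied fiberwise in $t$, of the characterization of $\mathcal{S}^\mu_\mu(\R^n)$ in Theorem 1.2 of \cite{GPR}; the genuinely new point is to keep all constants independent of $t$ so that the regularity in $t$ remains Gevrey of order $\sigma$. The interchange of $\partial_t^k$ with the $L^2$ inner product and the convergence of \eqref{fourier-f} in the relevant topology are then routine once the quantitative estimates are in place.
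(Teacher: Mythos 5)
Your proposal is correct and follows essentially the same route as the paper: both directions hinge on Proposition \ref{propequivPM} combined with the Parseval identity $\|P^M\partial_t^k u(t,\cdot)\|_{L^2}^2=\sum_j\lambda_j^{2M}|\partial_t^k u_j(t)|^2$, with the optimization over $M$ (equivalently Lemma \ref{lemma-exp-j}) and the Weyl asymptotics \eqref{weyl} converting between factorial growth in $M$ and exponential decay in $j$. The only cosmetic difference is that the paper absorbs the scalar equivalence $\sup_M R^M/(M!)^s\asymp\exp(sR^{1/s})$ into the citation of Lemma \ref{lemma-exp-j} rather than stating it explicitly.
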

 
To prove Theorem \ref{charGSspaces} we will need the following preliminary result.

\begin{lemma}\label{lemma-exp-j}
	Let $s, \gamma$ be positive numbers and $\ell \in \Z_+$. For every  $\epsilon>0$ there exist $C_{\epsilon}>0$ such that
	\begin{equation}\label{lemma-exp-j-eq}
		j^{\ell \gamma} \exp\left(-\epsilon j^{1/s}\right) \leq   C_{\epsilon}^{\ell} (\ell!)^{s \gamma},
	\end{equation}
	for every $j \in \N$.
\end{lemma}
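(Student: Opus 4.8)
The plan is to treat $j$ as a continuous variable and reduce the claim to an elementary one-variable optimization. Fix $\epsilon>0$ and $\ell\in\Z_+$, and set $g(r)=r^{\ell\gamma}\exp(-\epsilon r^{1/s})$ for $r>0$. Since $\sup_{r>0}g(r)$ dominates the value of the left-hand side of \eqref{lemma-exp-j-eq} at every integer $j$, it suffices to bound this supremum. (The case $\ell=0$ is immediate: the left side is then at most $1$, which equals $C_\epsilon^{0}(0!)^{s\gamma}$.)

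To locate the maximum I would differentiate $\log g(r)=\ell\gamma\log r-\epsilon r^{1/s}$, which gives the single critical point $r_\ast=(s\ell\gamma/\epsilon)^{s}$; since $g(r)\to 0$ both as $r\to 0^{+}$ and as $r\to\infty$ and $r_\ast$ is the unique critical point, it is the global maximum. Substituting yields
$$
\sup_{r>0}g(r)=g(r_\ast)=\left(\frac{s\ell\gamma}{\epsilon}\right)^{s\ell\gamma}e^{-s\ell\gamma}=\left(\frac{s\gamma}{\epsilon}\right)^{s\ell\gamma}\ell^{\,s\ell\gamma}e^{-s\ell\gamma}.
$$

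The remaining step converts the factor $\ell^{\,s\ell\gamma}e^{-s\ell\gamma}$ into $(\ell!)^{s\gamma}$. Here I would invoke the elementary bound $\ell^{\ell}\leq e^{\ell}\ell!$, which follows from retaining the $k=\ell$ term in the series $e^{\ell}=\sum_{k\geq 0}\ell^{k}/k!$. Raising to the power $s\gamma$ gives $\ell^{\,s\ell\gamma}\leq e^{s\ell\gamma}(\ell!)^{s\gamma}$, hence $\ell^{\,s\ell\gamma}e^{-s\ell\gamma}\leq(\ell!)^{s\gamma}$. Combining this with the previous display produces
$$
j^{\ell\gamma}\exp(-\epsilon j^{1/s})\leq\left(\frac{s\gamma}{\epsilon}\right)^{s\ell\gamma}(\ell!)^{s\gamma}=C_{\epsilon}^{\ell}(\ell!)^{s\gamma},\qquad C_{\epsilon}:=\left(\frac{s\gamma}{\epsilon}\right)^{s\gamma},
$$
for all $j\in\N$, which is precisely \eqref{lemma-exp-j-eq}.

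There is no genuine obstacle here; the lemma is a routine but convenient estimate. The only point demanding a little care is the bookkeeping of the $\ell$-dependence: the optimization generates the factor $(s\gamma/\epsilon)^{s\ell\gamma}$, whose exponent must be \emph{linear} in $\ell$ so that it can be absorbed into a constant raised to the power $\ell$, and the Stirling-type reduction $\ell^{\,s\ell\gamma}e^{-s\ell\gamma}\leq(\ell!)^{s\gamma}$ must match the target right-hand side exactly. I would verify at the end that $C_{\epsilon}$ depends only on $\epsilon$ and the fixed parameters $s,\gamma$, as asserted in the statement.
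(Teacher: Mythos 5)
Your proof is correct and complete: the optimization of $r\mapsto r^{\ell\gamma}e^{-\epsilon r^{1/s}}$ at $r_*=(s\ell\gamma/\epsilon)^s$ and the elementary bound $\ell^\ell\le e^\ell\,\ell!$ together give exactly \eqref{lemma-exp-j-eq} with $C_\epsilon=(s\gamma/\epsilon)^{s\gamma}$, and you correctly dispose of the $\ell=0$ case separately. The paper states this lemma without proof, and your argument is precisely the standard one it implicitly relies on, so there is nothing to compare against or to object to.
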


	
	

\textit{Proof of Theorem \ref{charGSspaces}.}
	Assume that $u \in \mathcal{S}_{\sigma,\mu} (\mathbb{T}\times \R^n).$ Then, for any fixed $t \in \mathbb{T}$ and $k \in \N$, we have $\partial_t^k u (t, \cdot) \in \mathcal{S}^{\mu}_\mu (\R^n).$ Then we have
	$$\partial_t^k u(t,x) = \sum_{j \in \N} a_j^k(t) \varphi_j(x),$$
	where 
	$$a_j^k(t)= (\partial_t^k u(t,\cdot), \varphi_j(\cdot))_{L^2(\R^n_x)} =\partial_t^k (u(t,\cdot), \varphi_j(\cdot))_{L^2(\R^n_x)}, \quad t \in \mathbb{T}.$$
	Moreover, we observe that
	$$\|P^M \partial_t^k u (t,\cdot)\|_{L^2(\R^n_x)}^2 = \left\| \sum_{j \in \N} \lambda_j^M a_j^k(t) \varphi_j(\cdot) \right\|_{L^2(\R^n_x)}^2= \sum_{j \in \N}\lambda_j^{2M}|a_j^k(t)|^2.$$
	
	By Proposition \ref{propequivPM} we get for every $j \in \N, M \in \N:$
	$$\sup_{t \in \mathbb{T}}|a_j^k(t)| \leq C^{M+1+k}k!^\sigma M!^{\mu m}\lambda_j^{-M} \sim C_1^{M+k+1}k!^\sigma M!^{\mu m}j^{-\frac{Mm}{2n}}$$
	for some positive constant $C_1$ independent of $j$ and $M$. Taking the infimum on $M$ of the right hand side, we obtain
	$$\sup_{t \in \mathbb{T}}|a_j^k(t)| \leq C_2^{k+1} k!^\sigma \exp \left[ -\varepsilon j^{\frac{1}{2n\mu}}\right], \quad j,k \in \N.$$
	
	In the opposite direction, assume that $u$ is of the form \eqref{fourier-f} with $u_j$ in $\mathcal{G}^\sigma(\mathbb{T})$ satisfying the condition \eqref{deccoeff}. 
	Then we have $\partial_t^k u \in L^2(\mathbb{T} \times \R^n)$ for every $k \in \N$ and 
	\begin{eqnarray*}\sup_{t \in \mathbb{T}}\|P^M \partial_t^k u(t,\cdot)  \|_{L^2(\R^n_x)} &=& \sup_{t \in \mathbb{T}} \left\| \sum_{j \in \N} \lambda_j^M a_j^k(t) \varphi_j(\cdot) \right\|_{L^2(\R^n_x)} \\
		&\leq&  C^{k+1}k!^\sigma \left[\sum_{j \in \N} j^{\frac{mM}{n}}\exp \left(-2\varepsilon j^{\frac{1}{2n\mu}}\right) \right]^{1/2}
		\\
		&\leq& C_1^{k+M+1}k!^\sigma M!^{m\mu}.
	\end{eqnarray*}
	in view of Lemma \ref{lemma-exp-j}. Then, by Proposition \ref{propequivPM}, $u \in \mathcal{S}_{\sigma,\mu}(\mathbb{T} \times \R^n)$.
	
\qed

\begin{corollary}\label{coro-fourier-coef-est}
	Let $\theta \in  \mathcal{S}_{\sigma,\mu}(\mathbb{T}\times \R^n)$. Then there exists $h>0$ such that $\theta_j \in \mathcal{G}^{\sigma,h}(\T)$ for all $j \in \N$. Moreover, there are $C>0$ and $\epsilon>0$ such that
	\begin{equation}\label{fourier-coef-est}
		\|\theta_j \|_{\sigma,h} \leq C  \exp\left(-\epsilon j^{\frac{1}{2n\mu}}\right), \ \forall j \in\N.
	\end{equation}
\end{corollary}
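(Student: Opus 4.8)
The plan is to derive Corollary \ref{coro-fourier-coef-est} as an almost immediate consequence of Theorem \ref{charGSspaces}, since the corollary simply repackages the coefficient estimate \eqref{deccoeff} into the Gevrey norm notation \eqref{def-ngsh}. First I would observe that if $\theta \in \mathcal{S}_{\sigma,\mu}(\T\times\R^n)$, then by Theorem \ref{charGSspaces} its Fourier coefficients $\theta_j(t) = (\theta(t,\cdot),\varphi_j(\cdot))_{L^2(\R^n_x)}$ form a sequence in $\mathcal{G}^{\sigma}(\T)$ satisfying \eqref{deccoeff}: there exist $C>0$ and $\epsilon>0$ with
\begin{equation}\label{cor-start}
	\sup_{t\in\T}|\partial_t^k \theta_j(t)| \leq C^{k+1}(k!)^{\sigma}\exp\!\left(-\epsilon j^{\frac{1}{2n\mu}}\right), \qquad \forall\, j,k\in\N.
\end{equation}

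The key step is then to convert \eqref{cor-start} into a statement about the norm $\|\cdot\|_{\sigma,h}$ defined in \eqref{def-ngsh}. I would set $h := C$ (the same constant appearing in \eqref{cor-start}, possibly enlarged so that $h>0$ is a valid Gevrey exponent) and factor the exponent $C^{k+1}=C\cdot C^k=C\cdot h^k$. Dividing both sides of \eqref{cor-start} by $h^k(k!)^{\sigma}$ and taking the supremum over $k\in\Z_+$ gives
$$
\|\theta_j\|_{\sigma,h} = \sup_{k\in\Z_+}\left\{\sup_{t\in\T}|\partial_t^k\theta_j(t)|\,h^{-k}(k!)^{-\sigma}\right\} \leq C\exp\!\left(-\epsilon j^{\frac{1}{2n\mu}}\right),
$$
which is exactly \eqref{fourier-coef-est}. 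In particular, finiteness of this norm shows $\theta_j\in\mathcal{G}^{\sigma,h}(\T)$ for every $j\in\N$ with a \emph{uniform} exponent $h$, which is the first assertion of the corollary.

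I do not expect a serious obstacle here, since the corollary is a reformulation rather than a new estimate; the only point requiring a little care is the bookkeeping of constants. Specifically, the factor $C^{k+1}$ in \eqref{cor-start} produces one extra factor of $C$ beyond the pure geometric part $C^k$, and one must check that the chosen $h$ absorbs the $C^k$ while the leftover $C$ is harmlessly carried into the prefactor of the exponential decay; since both $C$ and $\epsilon$ are uniform in $j$ by Theorem \ref{charGSspaces}, the resulting bound \eqref{fourier-coef-est} holds with constants independent of $j$, completing the proof.
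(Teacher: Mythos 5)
Your proposal is correct and matches the paper's (implicit) argument: the corollary is stated without proof precisely because it is the direct repackaging of the estimate \eqref{deccoeff} from Theorem \ref{charGSspaces} into the norm \eqref{def-ngsh}, exactly as you do by choosing $h=C$ and absorbing the geometric factor. The constant bookkeeping you flag is handled correctly, so nothing further is needed.
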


We now characterize the elements of $\mathcal{S}'_{\sigma,\mu}(\mathbb{T} \times \R^n)$.

\begin{lemma}
Let $u \in \mathcal{S}_{\sigma,\mu}' (\mathbb{T}\times \R^n)$.

\begin{itemize}
	\item [i)] For any $j \in \Z_+$ the linear form $u_j: \mathcal{G}^{\sigma}(\mathbb{T}) \to \C$ given by
	\begin{equation}\label{partial}
		\langle  u_j(t) \, , \, \psi(t) \rangle \doteq 
		\langle  u ,  \psi(t)\varphi_j(x) \rangle
	\end{equation}
	belongs to $(\mathcal{G}^{\sigma})'(\T)$. Moreover, for every $\epsilon>0$ and $h >0$, there exists $C_{\epsilon,h} >0$ such that
\begin{equation} \label{stimamancante}
|\langle  u_j(t) \, , \, \psi(t) \rangle | \leq C_{\epsilon,h} \| \psi\|_{\sigma, h} \exp \left(\epsilon j^{\frac1{2n\mu}}\right) \qquad \forall j \in \N, \, \forall \psi \in \mathcal{G}^{\sigma,h}(\T).
\end{equation}

	\item [ii)]  We may write
	\begin{equation}\label{def_serie}
		\langle  u \, , \, \theta \rangle  =
		\sum_{j \in \N} \langle u_j(t)\varphi_j(x)  \, , \, \theta \rangle
	\end{equation}
	where
	\begin{equation}
		\langle  u_j(t)\varphi_j(x) \, , \, \theta \rangle \doteq 
		\left\langle u_j(t)  \, , \, \int_{\mathbb{R}^n} \theta(t,x) \varphi_j(x) dx \right\rangle 
	\end{equation}
and  $\{u_j(t)\}_{j \in \N} \subset (\mathcal{G}^{\sigma})'(\T)$ is given by \eqref{partial}.
\end{itemize}

\end{lemma}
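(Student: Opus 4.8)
The plan is to establish (i) first, since the continuity estimate \eqref{stimamancante} contains all the analytic content, and then to deduce (ii) essentially formally. For (i) I would begin by checking that the right-hand side of \eqref{partial} makes sense: since $\varphi_j\in\mathcal{S}^{1/2}_{1/2}(\R^n)\subset\mathcal{S}^{\mu}_{\mu}(\R^n)$ for $\mu\geq 1/2$ and $\psi\in\mathcal{G}^{\sigma,h}(\T)$, the tensor product $\psi(t)\varphi_j(x)$ belongs to $\mathcal{S}_{\sigma,\mu}(\T\times\R^n)$, so $u$ may be evaluated on it. I would then apply the defining estimate \eqref{def-ultra} of $u\in\mathcal{S}'_{\sigma,\mu}$ to $\varphi=\psi\otimes\varphi_j$. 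The key algebraic observation is that $x^\alpha\partial_x^\beta\partial_t^k(\psi(t)\varphi_j(x))=\partial_t^k\psi(t)\cdot x^\alpha\partial_x^\beta\varphi_j(x)$, so the supremum in \eqref{def-ultra} factorizes. The $t$-factor $\sup_k A^{-k}k!^{-\sigma}\sup_t|\partial_t^k\psi|$ is bounded by $\|\psi\|_{\sigma,h}$ as soon as $A\geq h$, and what remains is to control the $x$-factor $\sup_{\alpha,\beta}A^{-|\alpha+\beta|}(\alpha!\beta!)^{-\mu}\sup_x|x^\alpha\partial_x^\beta\varphi_j(x)|$ uniformly in $j$.

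The heart of the matter, and the step I expect to be the main obstacle, is to show that for every $\epsilon>0$ one may choose $A$ so that this $x$-factor is dominated by $C_{\epsilon}\exp(\epsilon j^{1/(2n\mu)})$. Here I would exploit the eigenfunction identity: since $P\varphi_j=\lambda_j\varphi_j$ and $\|\varphi_j\|_{L^2}=1$, one has $\|P^M\varphi_j\|_{L^2(\R^n)}=|\lambda_j|^M$ for all $M$. Running the argument of Proposition \ref{propequivPM} with this exact value in place of the right-hand side of \eqref{equivPM}, that is, using the interpolation inequalities of \cite[Prop.~4.1]{GPR} and \cite[Lemma~2.4]{CGPR} to pass from $\|P^M\varphi_j\|_{L^2}$ to the $L^2$-seminorms $|\varphi_j|_s$ of \eqref{s}, followed by a Sobolev embedding to go from $L^2$ to $L^\infty$, leads to a bound of the shape $\sup_x|x^\alpha\partial_x^\beta\varphi_j|\leq C^{|\alpha+\beta|+1}(\alpha!\beta!)^{\mu}\sup_M R^{-M}(M!)^{-\mu m}|\lambda_j|^M$. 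Optimizing the supremum over $M$ and inserting the Weyl asymptotics \eqref{weyl}, $|\lambda_j|\sim C j^{m/2n}$, via Lemma \ref{lemma-exp-j} produces the exponential $\exp(c\,R^{-1/(\mu m)} j^{1/(2n\mu)})$; the crucial point is that the coefficient carries a negative power of the free parameter $R$ (equivalently, of the $A$ in \eqref{def-ultra}), so that taking $A$ large forces the exponent below any prescribed $\epsilon$. The delicate bookkeeping is precisely in tracking how the seminorm parameter propagates through the interpolation chain so that $\epsilon$ can indeed be made arbitrarily small. Combining the two factors then yields \eqref{stimamancante}, and for fixed $j$ this is exactly the continuity estimate \eqref{de-dual-s-2}, whence $u_j\in(\mathcal{G}^\sigma)'(\T)$, completing (i).

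For (ii), given $\theta\in\mathcal{S}_{\sigma,\mu}(\T\times\R^n)$ I would expand $\theta(t,x)=\sum_{j}\theta_j(t)\varphi_j(x)$ with $\theta_j(t)=\int_{\R^n}\theta(t,x)\varphi_j(x)\,dx$, and first argue that the partial sums $\sum_{j\leq N}\theta_j\varphi_j$ converge to $\theta$ in $\mathcal{S}_{\sigma,\mu}$: by Theorem \ref{charGSspaces} and Corollary \ref{coro-fourier-coef-est} they all lie in a single Banach space $\mathcal{S}_{\sigma,\mu,C}$ and, thanks to the coefficient decay \eqref{fourier-coef-est}, the tails tend to zero in its norm. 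Since $u$ is continuous on $\mathcal{S}_{\sigma,\mu}$, I may pass it through the series to obtain \eqref{def_serie}, the $j$-th term being $\langle u_j(t),\theta_j(t)\rangle$ by definition \eqref{partial} with $\psi=\theta_j$. Absolute convergence of this series is then immediate from the two estimates already in hand: pairing the growth bound \eqref{stimamancante} with the decay \eqref{fourier-coef-est} gives $|\langle u_j(t),\theta_j(t)\rangle|\leq C_{\epsilon,h}\,C\exp((\epsilon-\epsilon')j^{1/(2n\mu)})$, which is summable once $\epsilon<\epsilon'$. The only point requiring care here is the convergence of the expansion of $\theta$ in the inductive-limit topology, i.e. the existence of a single $C$ valid for all partial sums, which is furnished by the uniform coefficient bound \eqref{deccoeff}.
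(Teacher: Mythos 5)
Your argument is correct and follows essentially the same route as the paper: factorize the seminorms of the tensor product $\psi\otimes\varphi_j$, reduce \eqref{stimamancante} to a bound on $\sup_{\alpha,\beta}A^{-|\alpha+\beta|}(\alpha!\beta!)^{-\mu}\|x^\alpha\partial_x^\beta\varphi_j\|_{L^\infty}$ by $C_\epsilon\exp(\epsilon j^{1/(2n\mu)})$, and then pair term by term for (ii). The only difference is that the paper simply invokes \cite[Lemma 2.2]{GPR} for that eigenfunction seminorm estimate, whereas you sketch its proof (correctly, via $\|P^M\varphi_j\|_{L^2}=|\lambda_j|^M$, the interpolation chain, and the free parameter in the optimization over $M$), and you justify the interchange of $u$ with the series in (ii) more explicitly than the paper does.
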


\begin{proof}
$i)$ Indeed, given $\epsilon>0$ and setting $A = 1/\epsilon$ in \eqref{def-ultra} we obtain
$$
|\langle  u_j(t) \, , \, \psi(t) \rangle| \leq 
\kappa_{\epsilon,j} C_{\epsilon} 
\sup_{t \in \T, k \in \Z_+} \epsilon^{k}(k!)^{-\sigma} |\partial_t^k \psi(t)|,
$$
where
$$
\kappa_{\epsilon,j}  = 
\sup_{\R^n}\sup_{\alpha, \beta} \left(\epsilon^{|\alpha+\beta|} \alpha!^{-\mu} \beta!^{-\mu} |x^\alpha \partial_x^\beta \ \varphi_j(x)|\right),
$$
therefore the first item follows directly from \eqref{de-dual-s-1}. The inequality \eqref{stimamancante} easily follows from Definition \ref{dualspace} and from \cite[Lemma 2.2]{GPR}.

$ii)$ Now, let  $\theta \in \mathcal{S}_{\sigma,\mu}$ and consider its expansion
$$
\theta(t,x) = \sum_{j \in \N}\theta_j(t)\varphi_j(x),
$$
where 
$\theta_j(t) = \int_{\R^n} \theta(t,x) \varphi_j(x)dx.$ Hence, 
\begin{align*}
	\langle u \, , \, \theta(t,x) \rangle & = \sum_{j \in \N}
	\langle u \, , \, \theta_j(t)\varphi_j(x) \rangle 
	= \sum_{j \in \N}  \langle u_j(t) \, , \, \theta_j(t) \rangle \\
	& = \sum_{j \in \N} \left \langle u_j(t) \, , \, \int_{\R^n} \theta(t,x) \varphi_j(x)dx\right\rangle\\
	& = \sum_{j \in \N}\langle u_j(t)\varphi_j(x) \, , \, \theta(t,x) \rangle.
\end{align*}

\end{proof}

\begin{theorem}\label{the-cauchy}
	Let $\{u_j\}_{j \in \N}$ be a sequence in $\mathcal{S}_{\sigma,\mu}' (\mathbb{T}\times \R^n)$ such that
	$\{\langle u_j,\theta \rangle \}_{j \in \N}$ is a Cauchy sequence in $\C$, for all 
	$\theta \in \mathcal{S}_{\sigma,\mu} (\mathbb{T}\times \R^n)$. Then there exists $u \in \mathcal{S}_{\sigma,\mu}' (\mathbb{T}\times \R^n)$ such that
	$$
	\langle u, \theta \rangle = \lim_{j}  \langle u_j, \theta \rangle
	$$
\end{theorem}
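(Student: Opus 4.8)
The plan is to define the limit functional pointwise and then establish its continuity through the uniform boundedness principle applied on each Banach step of the inductive limit \eqref{def-S}.

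First I would set $\langle u, \theta \rangle := \lim_{j} \langle u_j, \theta \rangle$ for every $\theta \in \mathcal{S}_{\sigma,\mu}$. This limit exists because $\{\langle u_j, \theta \rangle\}_j$ is, by hypothesis, a Cauchy sequence in the complete field $\C$. Linearity of $u$ is immediate from the linearity of each $u_j$ together with the algebraic properties of limits, so $u$ is a well-defined linear form on $\mathcal{S}_{\sigma,\mu}$, and the desired identity $\langle u, \theta \rangle = \lim_j \langle u_j, \theta \rangle$ holds by construction. It thus only remains to verify that $u$ actually belongs to $\mathcal{S}'_{\sigma,\mu}$, i.e. that it satisfies the continuity estimate \eqref{def-ultra}.

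For this I would exploit the inductive limit structure. Observing that the right-hand side of \eqref{def-ultra} with parameter $A$ equals $C\,\|\varphi\|_{\sigma,\mu,A}$, the condition $u \in \mathcal{S}'_{\sigma,\mu}$ is equivalent to requiring that, for each $A>0$, the restriction of $u$ to the Banach space $\mathcal{S}_{\sigma,\mu,A}$ be continuous. So fix $C>0$ and regard each $u_j$ as a bounded linear functional on the Banach space $X_C := \mathcal{S}_{\sigma,\mu,C}$; it is bounded there precisely because $u_j \in \mathcal{S}'_{\sigma,\mu}$ satisfies \eqref{def-ultra}. For each fixed $\theta \in X_C$ the sequence $\{\langle u_j, \theta \rangle\}_j$ converges and is therefore bounded, so the family $\{u_j|_{X_C}\}_j$ is pointwise bounded on $X_C$.

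The main point, and the step I expect to carry the weight of the argument, is the Banach--Steinhaus (uniform boundedness) theorem: since $X_C$ is complete, pointwise boundedness of $\{u_j|_{X_C}\}_j$ yields a uniform bound $M_C$ on the operator norms, that is $|\langle u_j, \theta \rangle| \leq M_C \|\theta\|_{\sigma,\mu,C}$ for all $j$ and all $\theta \in X_C$. Letting $j \to \infty$ preserves this inequality and gives $|\langle u, \theta \rangle| \leq M_C \|\theta\|_{\sigma,\mu,C}$ on $X_C$. As $C>0$ was arbitrary, this is exactly \eqref{def-ultra} with $C(A) = M_A$, so $u \in \mathcal{S}'_{\sigma,\mu}$. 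The decisive ingredient is therefore the completeness of each $\mathcal{S}_{\sigma,\mu,C}$ recorded earlier, which is what makes Banach--Steinhaus available; one may restrict attention to a countable cofinal sequence $C = n \in \N$ so that the argument stays within the classical (LB)-space framework. The only genuine subtlety to check carefully is the identification of the continuity requirement \eqref{def-ultra} with continuity on each Banach step, after which the uniform boundedness principle closes the proof.
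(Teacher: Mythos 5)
Your proof is correct and rests on the same key ingredient as the paper's: the Banach--Steinhaus theorem applied to the restrictions of the $u_j$ to each Banach step $\mathcal{S}_{\sigma,\mu,C}$ of the inductive limit. If anything, your version is slightly more streamlined, since you pass the uniform bound $M_C$ directly to the limit to verify the defining estimate \eqref{def-ultra}, whereas the paper uses the same uniform bound to check sequential continuity of $u$ along null sequences.
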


\begin{proof}
	For each $\theta \in \mathcal{S}_{\sigma,\mu}$ we define
	$$
	\langle u , \theta \rangle = \lim_{j} \langle u_j , \theta \rangle.
	$$
	
	By hypothesis, $u$ is a linear operator from 
	$ \mathcal{S}_{\sigma,\mu}$ to $\C$. Let us prove that is continuous.  For this, let $\{\varphi_\ell\}_{\ell \in\N}$ be a sequence in $\mathcal{S}_{\sigma,\mu}$ converging to $0$. By the inductive topology we may fix $C>0$ such that
	$\varphi_\ell \in \mathcal{S}_{\sigma,\mu,C}$ and $\varphi_\ell \to 0$ in $\mathcal{S}_{\sigma,\mu,C}$. For each $j \in \N$, we set
	$$
	\omega_j \doteq u_j|_{\mathcal{S}_{\sigma,\mu,C}}: \mathcal{S}_{\sigma,\mu,C}  \to \C,
	$$
	which is linear and continuous.

	Note that, if $\psi \in \mathcal{S}_{\sigma,\mu,C}\subset \mathcal{S}_{\sigma,\mu}$, then
	$\{\langle \omega_j , \psi \rangle\}_{j \in \N}$ is Cauchy sequence in $\C$, hence it is bounded.
%
	Since $\mathcal{S}_{\sigma,\mu,C}$ and $\C$ are Banach spaces we obtain from the Banach-Steinhaus theorem that $\{\omega_j\}_{j \in \N}$  is uniformly bounded on the unitary ball, namely, there is a positive constant
	$K$  such that
	\begin{equation}\label{BS}
		|\langle \omega_j , \psi \rangle| \leq K, \ \|\psi\|_{\sigma, \mu, C} \leq 1, \ \forall  j \in \N,
	\end{equation}
	
	Now, let $\epsilon>0$ and set 
	$$
	\psi_{\ell} = \frac{2 K}{\epsilon} \varphi_\ell.
	$$
	By construction, we have 	$\psi_\ell \in \mathcal{S}_{\sigma,\mu,C}$ and $\psi_\ell \to 0$ in $\mathcal{S}_{\sigma,\mu,C}$. In particular, we may fix $\ell_0 \in \N$ such that $\|\psi_{\ell}\|_{\sigma,\mu, C} \leq 1$, for $\ell > \ell_0$.
	
	Hence, we get
	\begin{equation}\label{C1}
		|\langle u_j , \varphi_\ell \rangle| \leq \dfrac{\epsilon}{2},
	\end{equation}
	for all $\ell \geq \ell_0$ and $j \in\ N$.
	
	Since $\langle u , \varphi_{\ell} \rangle = \lim_{j} \langle u_j , \varphi_{\ell} \rangle$, we obtain for each $\ell\geq\ell_0$ an index $j_{\ell}$ satisfying 
	\begin{equation}\label{C2}
		|\langle u , \varphi_\ell \rangle - \langle u_{j_{\ell}} , \varphi_\ell \rangle | \leq \dfrac{\epsilon}{2}.
	\end{equation}
	
	Finally, for all $\ell\geq\ell_0$, it follows from \eqref{C1} and  \eqref{C2} that
	\begin{align*}
		|\langle u , \varphi_\ell \rangle| & \leq 
		|\langle u , \varphi_\ell \rangle - \langle u_{j_{\ell}} , \varphi_\ell \rangle | +  |\langle u_{j_{\ell}} , \varphi_\ell \rangle| 
		 \leq \epsilon.
	\end{align*}

\end{proof}

\begin{theorem}\label{theorm-seq}
	Let $\{a_j\}_{j \in \N} \subset (\mathcal{G}^{\sigma})'(\T)$ be a sequence satisfying  the following condition: given $\epsilon>0$ and $h>0$, there exists $C_{\epsilon, h}>0$ such that
	\begin{equation}\label{estimete-distr}
		|\langle  a_j \, , \, \psi \rangle| \leq C_{\epsilon, h} \|\psi\|_{\sigma,h}
		\exp\left(\epsilon j^{\frac{1}{2n\mu}}\right), \ \forall j \in \N, \ \forall \psi \in \mathcal{G}^{\sigma,h}(\T).
	\end{equation}
	
	Then
	\begin{equation}\label{fourier-inv}
		u(t,x) = \sum_{j \in \N} a_j \varphi_j(x),
	\end{equation}
	belongs to $\mathcal{S}_{\sigma,\mu}' (\mathbb{T}\times \R^n)$. Moreover, 
	$$
	\langle  a_j \, , \, \psi(t) \rangle  = \langle  u \, , \, \psi(t)\varphi_j(x) \rangle, \ \forall \psi \in  \mathcal{G}^{\sigma}(\T).
	$$
	
	We may use the notation
	$$
	\{a_{j}\}  \rightsquigarrow 
	u \in   \mathcal{S}_{\sigma,\mu}' (\mathbb{T}\times \R^n).
	$$
\end{theorem}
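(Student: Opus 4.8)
The plan is to define the action of $u$ on a test function $\theta \in \mathcal{S}_{\sigma,\mu}(\T\times\R^n)$ directly from the series \eqref{fourier-inv}, namely
$$\langle u, \theta\rangle := \sum_{j\in\N}\langle a_j, \theta_j\rangle, \qquad \theta_j(t) := \int_{\R^n}\theta(t,x)\varphi_j(x)\,dx,$$
and then to show that this sum converges, that it defines an element of $\mathcal{S}'_{\sigma,\mu}$, and that it reproduces the $a_j$ in the expected way. First I would record, via Corollary \ref{coro-fourier-coef-est}, that each $\theta_j$ lies in $\mathcal{G}^\sigma(\T)$ and that there exist $h>0$ and $C,\epsilon>0$ (depending on $\theta$) with $\theta_j\in\mathcal{G}^{\sigma,h}(\T)$ and $\|\theta_j\|_{\sigma,h}\leq C\exp(-\epsilon j^{1/(2n\mu)})$ for all $j$.

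The key estimate is the absolute convergence of the series. Applying the hypothesis \eqref{estimete-distr} with this same $h$ and with the choice $\epsilon' = \epsilon/2$, I obtain $C_{\epsilon/2,h}>0$ such that
$$|\langle a_j,\theta_j\rangle| \leq C_{\epsilon/2,h}\,\|\theta_j\|_{\sigma,h}\,\exp\!\big(\tfrac{\epsilon}{2}\,j^{1/(2n\mu)}\big) \leq C_{\epsilon/2,h}\,C\,\exp\!\big(-\tfrac{\epsilon}{2}\,j^{1/(2n\mu)}\big).$$
Since $j^{1/(2n\mu)}$ grows like a positive power of $j$, the series $\sum_j\exp\!\big(-\tfrac{\epsilon}{2}j^{1/(2n\mu)}\big)$ converges, so $\sum_j\langle a_j,\theta_j\rangle$ converges absolutely and $\langle u,\theta\rangle$ is well defined. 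The heart of the matter is exactly this competition: the growth $\exp(\epsilon' j^{1/(2n\mu)})$ permitted in \eqref{estimete-distr} is beaten by the decay of the Fourier coefficients of a test function, as long as one spends part of the available decay by taking $\epsilon'<\epsilon$.

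To upgrade $u$ to a genuine element of $\mathcal{S}'_{\sigma,\mu}$ I would avoid re-deriving a uniform continuity estimate and instead invoke Theorem \ref{the-cauchy}. Set $U_N := \sum_{j=1}^N a_j\varphi_j$, where $\langle a_j\varphi_j,\theta\rangle := \langle a_j,\theta_j\rangle$; each $U_N$ is a finite linear combination and lies in $\mathcal{S}'_{\sigma,\mu}$, since for a single index the bound $|\langle a_j,\theta_j\rangle|\leq C_h\|\theta_j\|_{\sigma,h}$ coming from $a_j\in(\mathcal{G}^\sigma)'(\T)$, together with the continuity of the map $\theta\mapsto\theta_j$ from $\mathcal{S}_{\sigma,\mu,A}$ into $\mathcal{G}^{\sigma,h}(\T)$ (which follows from $|\partial_t^k\theta_j(t)|\leq\|\partial_t^k\theta(t,\cdot)\|_{L^2}$ and the control of this $L^2$-norm by $\|\theta\|_{\sigma,\mu,A}$), yields \eqref{def-ultra}. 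By the absolute convergence just proved, $\langle U_N,\theta\rangle = \sum_{j\le N}\langle a_j,\theta_j\rangle$ is a Cauchy sequence in $\C$ for every $\theta$, so Theorem \ref{the-cauchy} provides a limit $u\in\mathcal{S}'_{\sigma,\mu}$ with $\langle u,\theta\rangle = \lim_N\langle U_N,\theta\rangle = \sum_j\langle a_j,\theta_j\rangle$, which is precisely the meaning of \eqref{fourier-inv}.

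Finally, for the reproducing identity I would take $\theta(t,x)=\psi(t)\varphi_j(x)$ with $\psi\in\mathcal{G}^\sigma(\T)$; since $\varphi_j\in\mathcal{S}^{1/2}_{1/2}(\R^n)$ this product lies in $\mathcal{S}_{\sigma,\mu}(\T\times\R^n)$, and by orthonormality of $\{\varphi_j\}$ its $i$-th Fourier coefficient is $\psi(t)\delta_{ij}$. Hence $\langle u,\psi\varphi_j\rangle = \sum_i\langle a_i,\psi\,\delta_{ij}\rangle = \langle a_j,\psi\rangle$, as claimed. I expect the only genuinely delicate step to be the convergence estimate above; the passage to $\mathcal{S}'_{\sigma,\mu}$ is then essentially bookkeeping once Theorem \ref{the-cauchy} is available, and the identity is immediate from orthonormality.
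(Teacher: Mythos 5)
Your proposal is correct and follows essentially the same route as the paper: both rest on Corollary \ref{coro-fourier-coef-est} for the decay of the coefficients $\theta_j$ of a test function, the hypothesis \eqref{estimete-distr} applied with $\epsilon/2$ to make the partial sums Cauchy, and Theorem \ref{the-cauchy} to produce the limit in $\mathcal{S}'_{\sigma,\mu}(\T\times\R^n)$. Your additional verification of the reproducing identity via orthonormality, and of the membership of each finite partial sum in $\mathcal{S}'_{\sigma,\mu}$, fills in details the paper leaves implicit, but introduces no new idea.
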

\begin{proof}
	For each $j \in \N$, define
	$$
	S_j = \sum_{k=0}^{j} a_k \varphi_k(x) \ \in \mathcal{S}_{\sigma,\mu}' (\mathbb{T}\times \R^n).
	$$

	Let $\theta \in  \mathcal{S}_{\sigma,\mu}(\mathbb{T}\times \R^n)$. By Corollary \ref{coro-fourier-coef-est} we may consider 
	$h>0$ such that $\theta_j \in \mathcal{G}^{\sigma,h}(\T)$ for all $j \in \N$. Given $\epsilon>0$ we obtain by hypothesis a positive constant
	$C_{\epsilon,h}>0$ such that
	\begin{align*}
		|\langle S_{j+\ell} - S_j , \theta  \rangle| & \leq
		\sum_{k=j}^{\ell} |\langle  a_k \, , \, \theta_k(t) \rangle| \\
		& \leq C_{\epsilon,h} \sum_{k=j}^{\ell} \|\theta_k\|_{\sigma,h}
		\exp\left(\epsilon k ^{\frac{1}{2n\mu}}\right).
	\end{align*}
	
	Now, in view of \eqref{fourier-coef-est} we may find $\epsilon_0>0$ such that
	\begin{equation*}
		\|\theta_k \|_{\sigma,h} \leq C  \exp\left(-\epsilon_0 k^{\frac{1}{2n\mu}}\right), \ \forall k \in\N,
	\end{equation*}
	for some constant $C>0$ independent of $k$.
	
	Hence, for $\epsilon = \epsilon_0/2$, we obtain
	\begin{align*}
		|\langle S_{j+\ell} - S_j , \theta  \rangle| 
		& \leq C_{\epsilon_0,h}C \sum_{k=j}^{\ell}
		\exp\left(-\frac{\epsilon_0}{2} k ^{\frac{1}{2n\mu}}\right),
	\end{align*}
	then $\{S_j(\theta)\}_{j\in\C}$ is a Cauchy sequence in $\C$, for all  $\theta \in  \mathcal{S}_{\sigma,\mu} (\mathbb{T}\times \R^n)$.
	
	It follows from Theorem \ref{the-cauchy} that there exists $u \in \mathcal{S}_{\sigma,\mu}' (\mathbb{T}\times \R^n)$
	such that
	$$
	\langle u , \theta \rangle = \lim_j  \langle  S_j , \theta \rangle,
	\ \forall \theta \in  \mathcal{S}_{\sigma,\mu} (\mathbb{T}\times \R^n).
	$$

\end{proof}

\section{Global hypoellipticity \label{sec-hypoellipticity}}

The main goal of this section is an investigation of the global hypoellipticity for the operators
\begin{equation}\label{general_operator}
	L = D_t + c(t)P(x,D_x), \ t \in \T,\  x \in \R^n, \ D_t = -i\partial_t,
\end{equation}
where $P(x,D_x)$ is a normal operator of the form \eqref{P-intro} satisfying \eqref{P-elliptic} and $c(t)$ belongs to the Gevrey class $\mathcal{G}^{\sigma}(\T)$. Since the arguments in the sequel will use cut-off functions, we restrict to the case $\sigma>1$.

In order to introduce the notion of global hypoellipticity, consider the spaces
$$
\mathscr{F}_\mu(\T \times \R^n) = \bigcup_{\sigma > 1}\mathcal{S}_{\sigma, \mu}(\mathbb{T}\times \R^n)
\	\textrm{ and } \
\mathscr{U}_\mu(\T \times \R^n) = \bigcup_{\sigma > 1}\mathcal{S}_{\sigma, \mu}'(\mathbb{T}\times \R^n).
$$ 

\begin{definition}\label{def-GH-2}
	We say that the operator $L$ is $\mathcal{S}_{\mu}$-globally hypoelliptic on $\T \times \R^n$  ($\mathcal{S}_{\mu}$-GH, for short) if conditions  
	$$
	u \in \mathscr{U}_\mu(\T \times \R^n) \ \textrm{and} \
	Lu \in \mathscr{F}_{\mu}(\T \times \R^n)
	$$
	imply $u \in \mathscr{F}_\mu(\T \times \R^n)$.
\end{definition}

\begin{remark}
We claim that if  $Lu = f \in \mathscr{F}_{\mu}(\T \times \R^n)$, then we may assume without loss of generality that  $f \in \mathcal{S}_{\sigma,\mu}(\T \times \R^n)$, that is, each $f_j$ belongs to the same Gevrey class as the coefficient $c=c(t)$. As a matter of fact, since $f \in \mathscr{F}_{\mu}(\T \times \R^n)$ we get  $f \in \mathcal{S}_{\theta,\mu}(\T \times \R^n)$, for some $\theta > 1$. Therefore:

\begin{itemize}
	\item if $\sigma \leq \theta$, then the claim is a consequence of inclusion $\mathscr{G}^{\sigma}(\T) \subseteq \mathscr{G}^{\theta}(\T)$;
	
	\item for $\theta<\sigma$ we use inclusion $ \mathcal{S}_{\theta,\mu}(\T \times \R^n) \subset \mathcal{S}_{\sigma,\mu}(\T \times \R^n)$.
\end{itemize}
\end{remark}

Now, if  $u \in \mathscr{U}_\mu(\T \times \R^n)$ is such that 
$iLu = f\in \mathcal{S}_{\sigma,\mu}(\T \times \R^n)$, then it follows from the eigenfunction expansions
\begin{equation*}
	u(t,x) = \sum_{j \in \N} u_j(t) \varphi_j(x)
	\ \textrm{ and } \
	f(t,x) = \sum_{j \in \N} f_j(t) \varphi_j(x),
\end{equation*}
that $u_j$ solve the equations
\begin{equation}\label{diffe-equations}
	\partial_t u_j(t) + i \lambda_j c(t) u_j(t) = f_j(t), \ t \in \T, \ j \in \N.
\end{equation}

Setting $c_0 = (2 \pi)^{-1} \int_{0}^{2 \pi} c(t) dt$ 
we obtain, from the ellipticity of equation \eqref{diffe-equations},  the following result.

\begin{proposition} \label{solutions}
	Let $u$ and $f$ as above. Then $u_j$ belongs to $\mathcal{G}^{\sigma}(\mathbb{T}),$ for all $j \in \N$. Moreover, for each $j\in \N$ such that $\lambda_jc_0 \notin \Z$, equation \eqref{diffe-equations} has a unique solution, which can be written in the following equivalent two ways:
	\begin{align}\label{Solu-1}
		u_j(t) = \frac{1}{1 - e^{-  2 \pi i\lambda_j c_0}} \int_{0}^{2\pi}\exp\left(-i\lambda_j\int_{t-s}^{t}c(r) \, dr\right) f_j(t-s)ds, 
	\end{align}
	or
	\begin{align}\label{Solu-2}
		u_j(t) = \frac{1}{e^{ 2 \pi i \lambda_j c_0} - 1} \int_{0}^{2\pi}\exp\left(i\lambda_j\int_{t}^{t+s}c(r) \, dr\right) f_j(t+s)ds. 
	\end{align}
\end{proposition}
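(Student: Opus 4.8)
The plan is to exploit the fact that, for each fixed index $j$, equation \eqref{diffe-equations} is a first-order linear ordinary differential equation on $\T$ with coefficient $i\lambda_j c(t) \in \mathcal{G}^{\sigma}(\T)$ and forcing term $f_j \in \mathcal{G}^{\sigma}(\T)$, the latter being furnished by Corollary \ref{coro-fourier-coef-est}. I would first establish the regularity assertion $u_j \in \mathcal{G}^{\sigma}(\T)$, and only afterwards pass to the explicit solvability and to the two solution formulas.

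For the regularity, introduce the integrating factor $\Phi_j(t) = \exp\left(i\lambda_j \int_0^t c(r)\,dr\right)$. Since $c \in \mathcal{G}^{\sigma}(\T)$, its primitive and the exponential $\Phi_j$ belong to $\mathcal{G}^{\sigma}$, and $\Phi_j$ is nowhere vanishing with reciprocal again in $\mathcal{G}^{\sigma}$. A priori $u_j$ is a Gevrey ultradistribution on $\T$, and equation \eqref{diffe-equations} rewrites as $\partial_t(\Phi_j u_j) = \Phi_j f_j$, whose right-hand side lies in $\mathcal{G}^{\sigma}$. Since any ultradistribution whose derivative is a Gevrey-$\sigma$ function differs from a primitive of that function by a constant, I conclude $\Phi_j u_j \in \mathcal{G}^{\sigma}$ and hence $u_j = \Phi_j^{-1}(\Phi_j u_j) \in \mathcal{G}^{\sigma}(\T)$. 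This is the elliptic-regularity step for \eqref{diffe-equations}.

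Next, for uniqueness and solvability when $\lambda_j c_0 \notin \Z$: the homogeneous equation has the one-dimensional solution space spanned by $\Phi_j^{-1}$, which is $2\pi$-periodic precisely when $\Phi_j(2\pi) = e^{2\pi i \lambda_j c_0} = 1$, that is, when $\lambda_j c_0 \in \Z$. Thus under $\lambda_j c_0 \notin \Z$ the only periodic homogeneous solution is trivial, giving uniqueness. To produce the solution, integrate $\partial_t(\Phi_j u_j) = \Phi_j f_j$ over a period and use the periodicity constraint $u_j(2\pi) = u_j(0)$ together with $\Phi_j(2\pi) \neq 1$ to solve for the initial value, substituting back to obtain a closed expression. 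I would then recast it in the symmetric form \eqref{Solu-2} and verify it directly: differentiating under the integral sign produces a term containing $\partial_s f_j(t+s)$, and an integration by parts in $s$ — using $\partial_s \exp\left(i\lambda_j\int_t^{t+s}c\right) = i\lambda_j c(t+s)\exp(\cdots)$, the periodicity $f_j(t+2\pi)=f_j(t)$, and the identity $\int_t^{t+2\pi}c = 2\pi c_0$ — generates exactly the boundary factor $e^{2\pi i \lambda_j c_0}-1$ that cancels the prefactor and leaves $\partial_t u_j + i\lambda_j c(t) u_j = f_j$.

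Finally, the equivalence of \eqref{Solu-1} and \eqref{Solu-2} follows from the change of variable $s \mapsto 2\pi - s$, again invoking the periodicity of $c$ and $f_j$ and the relation $\int_{t+s-2\pi}^{t}c = 2\pi c_0 - \int_t^{t+s}c$, which turns the prefactor $(1-e^{-2\pi i \lambda_j c_0})^{-1}$ into $(e^{2\pi i \lambda_j c_0}-1)^{-1}$ and the exponent accordingly. The step I expect to be most delicate is the regularity upgrade: one must justify multiplying the ultradistribution $u_j$ by the Gevrey factor $\Phi_j$ and the ``primitive of a Gevrey function is Gevrey'' principle within the ultradistributional framework, checking that the Gevrey index $\sigma$ is preserved throughout; the explicit verification of the formulas, although computational, is routine once the integration by parts in $s$ is correctly set up.
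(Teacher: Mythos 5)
Your argument is correct, and in fact the paper states Proposition \ref{solutions} without any proof at all, treating it as standard ODE theory on the circle combined with the regularity of the Fourier coefficients $f_j$ from Corollary \ref{coro-fourier-coef-est}; so there is no proof in the paper to compare against, and what you have written supplies exactly the missing details. Your verification of \eqref{Solu-2} via $\partial_t E(t,s)=\partial_s E(t,s)-i\lambda_j c(t)E(t,s)$ and the boundary term $\bigl(e^{2\pi i\lambda_j c_0}-1\bigr)f_j(t)$ is right, as is the passage between \eqref{Solu-1} and \eqref{Solu-2} by $s\mapsto 2\pi-s$ using $\int_{t+s-2\pi}^{t}c\,dr=2\pi c_0-\int_{t}^{t+s}c\,dr$.

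One small point deserves more care in the regularity step: the integrating factor $\Phi_j(t)=\exp\bigl(i\lambda_j\int_0^t c\bigr)$ satisfies $\Phi_j(2\pi)=e^{2\pi i\lambda_j c_0}$, so it is \emph{not} a function on $\T$ unless $\lambda_j c_0\in\Z$, and the product $\Phi_j u_j$ is not literally a periodic ultradistribution. This is harmless because Gevrey regularity is a local property: cover $\T$ by finitely many open arcs, on each arc identify $u_j$ with an ultradistribution on an interval of $\R$, multiply there by the (locally well-defined, nowhere vanishing) Gevrey factor $\Phi_j$, and apply the structure result that an ultradistribution with $\mathcal{G}^{\sigma}$ derivative is a $\mathcal{G}^{\sigma}$ function up to a constant. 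You should state this localization explicitly rather than writing $\partial_t(\Phi_j u_j)=\Phi_j f_j$ globally on $\T$. With that adjustment the proof is complete; the uniqueness argument (the periodic homogeneous solutions are spanned by $\Phi_j^{-1}$, which is $2\pi$-periodic precisely when $\lambda_j c_0\in\Z$) and the derivation of the closed formulas are exactly what one would expect the authors to have in mind.
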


From the latter result it follows that the study of global hypoellipticity problem for the operator $L$ can be reduced to the analysis of the behavior of the solutions \eqref{Solu-1} (or \eqref{Solu-2}) as $j \to \infty$. Moreover, 
 $L$ is $\mathcal{S}_{\mu}$-GH if and only if there exists $\theta > 1, C >0$ and $\epsilon>0$ such that
$$
\sup_{t \in \T} | \partial_t^{\gamma} u_j(t)| \leq
C^{\gamma+1} (\gamma!)^{\theta} \exp 
\left(-\epsilon j^{\frac{1}{2n\mu}} \right), \textrm{ as } \ j \to \infty,
$$
for every $\gamma \in \Z_+$.

Clearly, an analysis as suggested by this estimate requires a special attention for  estimates of the derivatives of the exponential terms in the integrals \eqref{Solu-1} (or \eqref{Solu-2}). Moreover, the Diophantine approximations suggested by  $1 - e^{-  2 \pi i\lambda_j c_0}$ play an important role and they are connected with \textrm{time independent} coefficients operators. In view of these considerations, our first investigations are directed to the study of this case and presented in Subsection \ref{sec-const-coeff}. The analysis of operators with time dependent coefficients is developed in Subsection \ref{sec-variable_coeff}.

\subsection{Operators with time independent coefficients} \label{sec-const-coeff}

Let $\mathcal{L}$ be the  operator
\begin{equation} \label{ccop}
\mathcal{L} = D_t + (\alpha + i\beta)P(x,D_x), \ t \in \T, \ \alpha, \beta \in \R.
\end{equation}

Given $f\in \mathcal{S}_{\sigma,\mu}(\T \times \R^n)$ and $u \in \mathscr{U}_\mu(\T \times \R^n)$ satisfying $i\mathcal{L}u = f$ we get the equations
\begin{equation}\label{diffe-equations-constant-case}
	\partial_t u_j(t) + i (\alpha + i \beta)\lambda_j  u_j(t) = f_j(t), \ t \in \T, \ j \in \N.
\end{equation}

It follows from Proposition \ref{solutions} that
$u_j$ belongs to $\mathcal{G}^{\sigma}(\mathbb{T}).$ In particular, if  $j \in \N$ is such that $(\alpha + i \beta)\lambda_j \notin \Z$,
then  the solution of 
\eqref{diffe-equations-constant-case} can be written in the following equivalent two ways:
\begin{equation}\label{Solu-costante-1}
	u_j(t) = \frac{1}{1 - e^{-  2 \pi i (\alpha + i \beta)\lambda_j}} \int_{0}^{2\pi}\exp\left(s (\beta - i\alpha)\lambda_j \right) f_j(t-s)ds,
\end{equation}
or
\begin{equation}\label{Solu-costante-2}
	u_j(t) = \frac{1}{e^{ 2 \pi i (\alpha + i \beta)\lambda_j} - 1} \int_{0}^{2\pi}\exp\left( s (-\beta + i\alpha) \lambda_j \right) f_j(t+s)ds.
\end{equation}

If $\beta\lambda_j \leq 0$ we use formula \eqref{Solu-costante-1}, and for $\beta\lambda_j  \geq 0$ we use \eqref{Solu-costante-2}. Thus, for any given  $\gamma \in \Z_+$ we get 
\begin{equation}\label{der_uj_case1}
	|\partial_t^{\gamma} u_j(t)| \leq 
	2\pi \Theta_j
	\sup_{t \in \T} |\partial_t^{\gamma} f_j(t)|
\end{equation}
for $\beta\lambda_j  \leq 0$ and 
\begin{equation}\label{der_uj_case2}
	|\partial_t^{\gamma} u_j(t)| \leq 2\pi
	\Gamma_j 
	\sup_{t \in \T} |\partial_t^{\gamma} f_j(t)|
\end{equation}
for $\beta\lambda_j  \geq 0$, where 
\begin{equation}\label{Theta_j-Gamma_j}
	\Theta_j = |1 - e^{-  2 \pi i (\alpha + i \beta)\lambda_j}|^{-1}
	\ \textrm{ and} \
	\Gamma_j = |e^{  2 \pi i (\alpha + i \beta)\lambda_j}-1|^{-1}.
\end{equation}

\smallskip

Now, motivated by Definition 3.3 in
\cite{AvGrKi} and Proposition 1.3 in \cite{GPY}, we introduce the following definition.

\begin{definition}\label{def-non-exp-Liou}
We say that a real number $\kappa$ is not a $\mu$-exponential Liouville number with respect to the sequence $\{\lambda_j\}$ if for every $\epsilon>0$ there exists $C_{\epsilon}>0$ such that
\begin{equation}\label{non-exp-Liou}
	\inf_{\tau \in \Z} | \tau - \kappa\lambda_j| \geqslant C_{\epsilon} \exp \left(-\epsilon j^{\frac{1}{2n\mu}}\right), \ \textrm{ as }  \ j \to \infty.
\end{equation}
\end{definition}

\smallskip

We make use of the next lemma,  whose proof can be obtained by a slight modification of the arguments in the proof of  Lemma 2.5 in \cite{BDG18}.

\begin{lemma}\label{lt2} Consider $\eta\geq1$  and $\omega \in \C.$  The following two conditions are equivalent:
	\begin{itemize}
		\item[i)] for each $\epsilon>0$ there exists a positive constant $C_\epsilon$ such that
		$$
		|\tau - \omega\lambda_j|\geqslant C_\epsilon\exp\{-\epsilon(|\tau|+j)^{1/\eta}\}, \ \forall  \tau \in \mathbb{Z},\,  \forall j \in \mathbb{N}.
		$$
		
		\item[ii)] for each $\delta>0$ there exists a positive constant $C_\delta$ such that 
		$$
		|1-e^{2\pi i \omega\lambda_j}|\geqslant C_\delta\exp\{-\delta j^{1/\eta}\}, \ \forall  j \in\mathbb{N}.
		$$
		
	\end{itemize}
\end{lemma}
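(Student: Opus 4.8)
The plan is to prove the equivalence of conditions (i) and (ii) in Lemma \ref{lt2} by exploiting the elementary fact that for any real number $\theta$, the modulus $|1 - e^{2\pi i \theta}| = 2|\sin(\pi\theta)|$ is comparable to the distance from $\theta$ to the nearest integer. First I would write $\omega\lambda_j = a_j + i b_j$ with $a_j = \Re(\omega\lambda_j)$ and $b_j = \Im(\omega\lambda_j)$, and compute
\begin{equation*}
|1 - e^{2\pi i \omega\lambda_j}|^2 = |1 - e^{-2\pi b_j}e^{2\pi i a_j}|^2 = (1 - e^{-2\pi b_j})^2 + 4 e^{-2\pi b_j}\sin^2(\pi a_j).
\end{equation*}
The key reduction is that the real part $a_j = \Re(\omega)\lambda_j$ carries the Diophantine information, while the imaginary part only contributes a bounded-below factor when $b_j$ stays away from $0$; so the genuinely delicate estimate concerns the quantity $|\sin(\pi a_j)|$, which by the standard inequality $\tfrac{2}{\pi}\,\mathrm{dist}(a_j,\Z) \leq |\sin(\pi a_j)| \leq \pi\,\mathrm{dist}(a_j,\Z)$ is comparable to $\inf_{\tau\in\Z}|\tau - \Re(\omega)\lambda_j|$.

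Next I would address the two directions. For (i) $\Rightarrow$ (ii): assuming the lower bound in (i) holds for all $\tau\in\Z$, I would specialize to the nearest integer $\tau_j$ to $\Re(\omega)\lambda_j$, whose size is controlled by $|\tau_j| \lesssim |\lambda_j|$, hence by the Weyl asymptotics \eqref{weyl} by a power of $j$; this lets me absorb the exponent $(|\tau_j| + j)^{1/\eta}$ into $C j^{1/\eta}$ and thereby bound $\mathrm{dist}(\Re(\omega)\lambda_j,\Z)$ from below by $C_\delta\exp\{-\delta j^{1/\eta}\}$. Combining this with the sine comparison and the trivial bound $(1-e^{-2\pi b_j})^2 \geq 0$ (together with, where needed, the contribution of the first summand when $b_j$ is bounded away from $0$) yields (ii). For (ii) $\Rightarrow$ (i): from the lower bound on $|1 - e^{2\pi i\omega\lambda_j}|$ I would extract a lower bound on $|\sin(\pi a_j)|$ and hence on $\mathrm{dist}(\Re(\omega)\lambda_j,\Z)$, and then for a general $\tau\in\Z$ split into the case where $\tau$ is the nearest integer (handled directly) and the case where $|\tau - \Re(\omega)\lambda_j| \geq 1/2$ (trivially bounded below). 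The exponent $(|\tau| + j)^{1/\eta}$ in (i) is more generous than $j^{1/\eta}$, so the extra dependence on $|\tau|$ only helps.

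The main obstacle I expect is the bookkeeping of the imaginary part $b_j = \Im(\omega)\lambda_j$, since when $\Im(\omega)\neq 0$ the factor $e^{-2\pi b_j}$ can be either exponentially large or exponentially small in $|\lambda_j|$, so the identity above must be handled by cases according to the sign of $b_j$ and whether $\Im(\omega)=0$. When $b_j \to +\infty$ one has $|1 - e^{2\pi i\omega\lambda_j}| \to 1$ and (ii) is automatic, while the symmetric large-negative case forces a rewriting (e.g. passing to $|e^{-2\pi i\omega\lambda_j}-1|$, which is the quantity actually appearing via $\Theta_j, \Gamma_j$ in \eqref{Theta_j-Gamma_j}); making sure both conditions are genuinely nontrivial only when $\Im(\omega)=0$ and the real-part Diophantine estimate is then the sole content is where care is needed. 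Once this case analysis is organized, invoking the Weyl formula \eqref{weyl} to convert between powers of $|\lambda_j|$ and powers of $j$ in the exponents is routine, and the equivalence follows. This is exactly the modification of \cite[Lemma 2.5]{BDG18} referred to in the statement, the only new ingredient being the replacement of the integer-lattice spacing by the eigenvalue sequence $\{\lambda_j\}$ and its asymptotics.
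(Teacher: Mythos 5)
Your overall strategy --- the identity $|1-e^{2\pi i\omega\lambda_j}|^2=(1-e^{-2\pi b_j})^2+4e^{-2\pi b_j}\sin^2(\pi a_j)$, the comparison of $|\sin(\pi a_j)|$ with $\operatorname{dist}(a_j,\Z)$, and the case analysis on $\Im\omega$ --- is the natural adaptation of \cite[Lemma 2.5]{BDG18}; note that the paper gives no internal proof of this lemma and simply defers to that reference, so there is nothing to compare against line by line. The direction (ii) $\Rightarrow$ (i) and the observation that both conditions are automatic for large $j$ when $\Im\omega\neq 0$ (since then $|b_j|=|\Im\omega|\,|\lambda_j|\to\infty$) are fine as you sketch them.

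The gap is in (i) $\Rightarrow$ (ii), at the step where you ``absorb $(|\tau_j|+j)^{1/\eta}$ into $Cj^{1/\eta}$.'' The nearest integer $\tau_j$ to $\Re(\omega)\lambda_j$ has size $|\tau_j|\lesssim|\lambda_j|\sim Cj^{m/2n}$ by \eqref{weyl}, so $(|\tau_j|+j)^{1/\eta}\lesssim j^{\max\{m/2n,\,1\}/\eta}$, and this is $O(j^{1/\eta})$ only when $m\le 2n$. For $m>2n$ (e.g.\ a fourth-order globally elliptic operator on $\R$), applying (i) at $\tau=\tau_j$ only yields $\operatorname{dist}(\Re(\omega)\lambda_j,\Z)\ge C_\epsilon\exp\{-\epsilon C j^{m/(2n\eta)}\}$, which does not imply the bound $C_\delta\exp\{-\delta j^{1/\eta}\}$ required for (ii), no matter how small $\epsilon$ is chosen: a sequence with $\operatorname{dist}(\Re(\omega)\lambda_j,\Z)\approx\exp(-j^{\rho/\eta})$ for some $1<\rho<m/2n$ would satisfy (i) while violating (ii), so the chain of inequalities genuinely does not close. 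You should either restrict this direction to $m\le 2n$, or replace hypothesis (i) by the stronger condition \eqref{non-exp-Liou} (exponent $j^{1/(2n\mu)}$ with no $|\tau|$ term), which is what is actually available in every application of the lemma in the paper and from which (ii) follows immediately via the sine comparison.
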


With the next result we obtain a complete characterization of the global hypoellipticity for operators of the form \eqref{ccop}.

\begin{theorem}\label{main_const}
	The operator $\mathcal{L}$ is $\mathcal{S}_{\mu}$-GH if and only if one of the following conditions holds:
	\begin{enumerate}
		\item [a)] $\beta \neq 0$;
		
		\item [b)] $\beta = 0$ and $\alpha$ is not a $\mu$-exponential Liouville number with respect to the sequence $\{\lambda_j\}$.
	\end{enumerate}

\end{theorem}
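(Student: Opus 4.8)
The plan is to reduce the problem to size estimates for the numbers $\Theta_j$ and $\Gamma_j$ in \eqref{Theta_j-Gamma_j}. Recall from the discussion following Proposition \ref{solutions} that $\mathcal{L}$ is $\mathcal{S}_\mu$-GH if and only if every solution $u$ of $i\mathcal{L}u=f$, $f\in\mathcal{S}_{\sigma,\mu}$, has coefficients obeying $\sup_{t\in\T}|\partial_t^\gamma u_j(t)|\le C^{\gamma+1}(\gamma!)^\theta\exp(-\epsilon j^{1/(2n\mu)})$ for some $\theta>1$ and $C,\epsilon>0$. Since for $\lambda_j\ne0$ the imaginary part of $(\alpha+i\beta)\lambda_j$ equals $\beta\lambda_j$, this number lies in $\Z$ only if $\beta=0$ and $\alpha\lambda_j\in\Z$; hence, apart from the finitely many indices with $\lambda_j=0$ (which produce finitely many Gevrey terms by Proposition \ref{solutions} and do not affect membership in $\mathscr{F}_\mu$), each $u_j$ is given by \eqref{Solu-costante-1}--\eqref{Solu-costante-2} and is governed by $\Theta_j,\Gamma_j$ through \eqref{der_uj_case1}--\eqref{der_uj_case2}. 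I would then prove the two implications separately.

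For the sufficiency of (a), assume $\beta\ne0$. As $|\lambda_j|\to\infty$, in the regime $\beta\lambda_j\le0$ one has $|e^{-2\pi i(\alpha+i\beta)\lambda_j}|=e^{2\pi\beta\lambda_j}\to0$, whence $\Theta_j\le(1-e^{2\pi\beta\lambda_j})^{-1}$ remains bounded; by the symmetric computation $\Gamma_j$ is bounded in the regime $\beta\lambda_j\ge0$. Inserting this uniform bound into \eqref{der_uj_case1}--\eqref{der_uj_case2} and using the decay of the $f_j$ from Corollary \ref{coro-fourier-coef-est} yields exactly the required estimate for $u_j$, so $u\in\mathscr{F}_\mu$ by Theorem \ref{charGSspaces}. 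For the sufficiency of (b), let $\beta=0$, so that $\Theta_j=\Gamma_j=|1-e^{-2\pi i\alpha\lambda_j}|^{-1}$. The elementary comparison $|1-e^{i\vartheta}|\asymp\operatorname{dist}(\vartheta,2\pi\Z)$ (equivalently, Lemma \ref{lt2} with $\omega=\alpha$, $\eta=2n\mu$) shows that $\Theta_j\asymp\big(\inf_{\tau\in\Z}|\tau-\alpha\lambda_j|\big)^{-1}$; the hypothesis that $\alpha$ is not a $\mu$-exponential Liouville number then gives, for every $\delta>0$, a constant $C_\delta$ with $\Theta_j\le C_\delta\exp(\delta j^{1/(2n\mu)})$. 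Choosing $\delta$ below the decay rate of $f_j$ again produces the estimate for $u_j$, and GH follows.

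For the necessity I would argue by contraposition, assuming neither (a) nor (b), i.e. $\beta=0$ and $\alpha$ \emph{is} a $\mu$-exponential Liouville number, and exhibit a counterexample. Negating \eqref{non-exp-Liou} furnishes $\epsilon_0>0$ and a subsequence $\{j_\ell\}$ with $\inf_{\tau\in\Z}|\tau-\alpha\lambda_{j_\ell}|\le\exp(-\epsilon_0 j_\ell^{1/(2n\mu)})$; let $\tau_\ell\in\Z$ realise this distance. Set $u_{j_\ell}(t)=e^{-i\tau_\ell t}$, $u_j\equiv0$ otherwise, and $u=\sum_\ell u_{j_\ell}\varphi_{j_\ell}$. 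Then $u\in\mathscr{U}_\mu$: the uniform bound $|\langle u_{j_\ell},\psi\rangle|\le2\pi\|\psi\|_{\sigma,h}$ meets the hypothesis of Theorem \ref{theorm-seq}. However $u\notin\mathscr{F}_\mu$, since $\sup_t|u_{j_\ell}(t)|=1$ contradicts the decay \eqref{deccoeff}. A direct computation gives $f_{j_\ell}(t):=\partial_t u_{j_\ell}+i\alpha\lambda_{j_\ell}u_{j_\ell}=i(\alpha\lambda_{j_\ell}-\tau_\ell)e^{-i\tau_\ell t}$ and $f_j\equiv0$ otherwise, so that $\sup_t|\partial_t^\gamma f_{j_\ell}|=|\alpha\lambda_{j_\ell}-\tau_\ell|\,|\tau_\ell|^\gamma\le\exp(-\epsilon_0 j_\ell^{1/(2n\mu)})\,(Cj_\ell^{m/2n})^\gamma$, using $|\tau_\ell|\le C j_\ell^{m/2n}$ from the Weyl law \eqref{weyl}. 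Splitting the exponential in half and absorbing $j_\ell^{\gamma m/2n}$ into $C^\gamma(\gamma!)^{m\mu}$ by Lemma \ref{lemma-exp-j} shows the $f_j$ satisfy the decay of Theorem \ref{charGSspaces} with $\theta=m\mu$; hence $f=i\mathcal{L}u\in\mathscr{F}_\mu$, and $u$ witnesses that $\mathcal{L}$ is not $\mathcal{S}_\mu$-GH.

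The boundedness of $\Theta_j,\Gamma_j$ in case (a) and the membership verifications in the counterexample are routine. The step I expect to demand the most care is the necessity: one must choose the single time-frequency $\tau_\ell$ so that $u$ genuinely fails to decay while $f=i\mathcal{L}u$ still lands in a Gelfand-Shilov class, which rests on the Liouville smallness $\exp(-\epsilon_0 j_\ell^{1/(2n\mu)})$ overcoming the growth $|\tau_\ell|^\gamma\sim\lambda_{j_\ell}^\gamma$ uniformly in $\gamma$ --- precisely the content of Lemma \ref{lemma-exp-j}. On the sufficiency side the only nonroutine point is the transition, via the sine comparison (or Lemma \ref{lt2}), from the Diophantine Definition \ref{def-non-exp-Liou} to the subexponential control of $\Theta_j$.
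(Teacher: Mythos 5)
Your proposal is correct and follows essentially the same route as the paper: bounding $\Theta_j,\Gamma_j$ via the limit $e^{2\pi\beta\lambda_j}\to 0$ when $\beta\neq0$, passing from Definition \ref{def-non-exp-Liou} to the lower bound on $|1-e^{-2\pi i\alpha\lambda_j}|$ through Lemma \ref{lt2} when $\beta=0$, and, for necessity, the identical counterexample $u_{j_\ell}(t)=e^{-i\tau_\ell t}$ with $f_{j_\ell}$ proportional to $(\tau_\ell-\alpha\lambda_{j_\ell})e^{-i\tau_\ell t}$, controlled by the Weyl law and Lemma \ref{lemma-exp-j}. No gaps to report.
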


\begin{proof}
Let us start with the sufficiency part. First we observe that either if $\beta\neq 0$ or if $\beta=0$ and  $\alpha$ is not a $\mu$-exponential Liouville number with respect to the sequence $\{\lambda_j\}$, then the set
\begin{equation} \label{W}
	\mathcal{W} =\{j \in \N : (\alpha +i\beta) \lambda_j \in \Z \}
\end{equation}
 is finite. Therefore, the solutions of equations \eqref{diffe-equations-constant-case} are given by \eqref{Solu-costante-1} and \eqref{Solu-costante-2}, for $j$ large enough.
 
Assume  that $\beta\neq 0$.  In this case $\lim_j \Theta_j =1$, when $\beta <0$ and $\lim_j \Gamma_j =1$ for  $\beta >0$, where  $\Theta_j$ and $\Gamma_j$ are given by \eqref{Theta_j-Gamma_j}. Hence, 
$$
|\partial^{\gamma}_t u_j(t)| \leq C \sup_{t \in \T}|
\partial^{\gamma} f_j(t)|,
$$
and $u$ belongs to the same class as $f$. Then $\mathcal{L}$ is $\mathcal{S}_{\mu}$-GH.

Now, assume that $\beta  = 0$ and $\alpha$ is not a $\mu$-exponential Liouville number. Under these assumptions it is enough to consider expression \eqref{Solu-costante-1}. In particular, inequalities \eqref{der_uj_case1} can be  rewritten as 
\begin{equation}
	|\partial_t^{\gamma} u_j(t)| \leq 2\pi 
	\Theta_j
	\sup_{t \in \T} |\partial_t^{\gamma} f_j(t)|.
\end{equation}

By hypothesis and Lemma \ref{lt2} for  every $\epsilon>0$ there exists a constant  $C_{\epsilon}>0$ such that
$$
|1-e^{- 2\pi i \alpha\lambda_j}| \geqslant C_{\epsilon} \exp \left(-\epsilon j^{\frac{1}{2n\mu}}\right), \ \textrm{ as }  \ j \to \infty.
$$

Assume now that $f \in \mathcal{S}_{\sigma,\mu}$ and let $\gamma \in \Z^+$. There exists  $\epsilon_0 >0$ such that 
$$
\sup_{t \in \T} | \partial_t^\gamma f_j(t)| \leq
C^{\gamma+1} (\gamma!)^{\sigma} \exp \left[-\epsilon_0 j^{\frac{1}{2n\mu}} \right].
$$

By fixing $\epsilon = \epsilon_0/2$ we obtain $C_{\epsilon_0}>0$ for which
\begin{align*}
	|\partial_t^{\gamma} u_j(t)| & \leq C_{\epsilon_0}  \exp \left[ \frac{\epsilon_0}{2} j^{\frac{1}{2n\mu}} \right] \sup_{t \in \T} |\partial_t^{\gamma} f_j(t)| \\
	& \leq C_{\epsilon_0} C^{\gamma+1} (\gamma!)^{\sigma} 
	\exp \left[-\frac{\epsilon_0}{2}  j^{\frac{1}{2n\mu}} \right],
\end{align*}
which implies that  $u \in \mathcal{S}_{\sigma,\mu}$ in view of Theorem \ref{charGSspaces}. Therefore $\mathcal{L}$ is $\mathcal{S}_{\mu}$-GH.

To prove the necessary part, we assume that $\beta =0$ and \eqref{non-exp-Liou} fails and exhibit a singular solution to the operator  $\mathcal{L}$.  To do this, note that if \eqref{non-exp-Liou} fails, then there exists $\epsilon'$ and an increasing sequence
$\{\tau_{\ell}\}_{\ell \in \N} \subset \Z$  such that
$$
|\tau_{\ell} - \alpha\lambda_{j_\ell}| <  \exp \left(-\epsilon' j_\ell^{\frac{1}{2n\mu}}\right).
$$

Consider the sequences $\{u_j\}_{j \in \N}, \{f_j\}_{j \in \N} \subset C^{\infty}(\T)$ defined by
$$
u_j(t) = 
\left\{
\begin{array}{l}
	e^{-i\tau_{\ell} t}, \ \textrm{ if } \ j = j_\ell, \\
	0, \ \textrm{ if } \ j \neq j_\ell.
\end{array}
\right.
\ \textrm{ and } \
f_j(t) = 
\left\{
\begin{array}{l}
	(\tau_{\ell} - \alpha\lambda_{j_\ell}) e^{-i\tau_{\ell}t}, \ \textrm{ if } \ j = j_\ell, \\
	0, \ \textrm{ if } \ j \neq j_\ell.
\end{array}
\right.
$$

Since $|\tau_{\ell}|\leq 1 + |\alpha| \lambda_{j_{\ell}}$ we get
\begin{equation}\label{est-tau}
|\tau_{\ell}|^{\gamma}  \leq C \sum_{\beta=0}^{\gamma}\binom{\gamma}{\beta}
|\alpha|^{\beta} j_{\ell}^{\beta m/2n} 
\leq C j_{\ell}^{\gamma m/2n}, 	
\end{equation}
as $\ell \to \infty$. Now, by Lemma \ref{lemma-exp-j}, we obtain  $C_{\epsilon'}>0$ such that
\begin{equation}\label{lemma-exp-jcons}
	j^{\beta m/2n}_{\ell} \exp\left(-\frac{\epsilon'}{2} j^{\frac{1}{2n\mu}}\right)\leq   C_{\epsilon'}^{\beta} (j_{\ell}!)^{m\mu},
\end{equation}
hence 
\begin{align*}
	|\partial_t^{\gamma} f_{j_{\ell}}(t)| & \leq 
	C_{\epsilon'}^\gamma \sum_{\beta=0}^{\gamma} \left[\binom{\gamma}{\beta}
	|\alpha|^{\beta} j_{\ell}^{\beta m/2n}\right] \exp \left(-{\epsilon'} j_\ell^{\frac{1}{2n\mu}}\right) \\
	& \leq C_{\epsilon'}^\gamma (\gamma!)^{ m \mu}\exp\left(-\frac{\epsilon'}{2} j_\ell^{\frac{1}{2n\mu}}\right).
\end{align*}

Therefore we have 
$$
\{u_{\ell}(t)\}  \rightsquigarrow 
u \in \mathscr{U}_\mu(\T \times \R^n)\setminus \mathscr{F}_\mu(\T \times \R^n)
$$
and
$\{f_{\ell}(t)\}  \rightsquigarrow f \in  \mathscr{F}_\mu(\T \times \R^n)$. Since  $\mathcal{L} u  =f$, then $\mathcal{L}$ is not $\mathcal{S}_{\mu}$-GH.

\end{proof}

\begin{remark}\label{remark-lost-constant}
It is important to emphasize that for globally hypoelliptic operators with time independent coefficients there is no loss of regularity on the variable $t$, that is, if $\mathcal{L}$ is $\mathcal{S}_\mu$-GH and $\mathcal{L}u \in \mathcal{S}_{\sigma, \mu}$, then $u \in \mathcal{S}_{\sigma, \mu}$. This is in contrast with the time dependent coefficients case, as the reader can see in Theorem \ref{variable-case} and Remark \ref{remark-loss-variable}.
\end{remark}

\begin{example}[Harmonic oscillator  on $\R$]\label{Harmonic_constant}
Consider on $\T \times \R$ the operator	
$$
\mathcal{L} = D_t + \alpha H, \ \alpha \in \R, 
$$	
where $H$ denote the Harmonic oscillator 
\begin{equation}\label{harmonic-R}
H = -\dfrac{d^2}{dx^2} + x^2, \ x \in \R.
\end{equation}
It is already known from \cite[Proposition 1.3]{GPY} that if $\alpha \notin \Q$ and is not a $2\mu$-exponential Liouville number, then $\mathcal{L}$ is  $\mathcal{S}_{\mu}$-GH.
Since the eigenvalues of $H$ are given by  $\lambda_j = 2j + 1$, $j \in \N_0$, Theorem \ref{main_const} states that $\mathcal{L}$ is  $\mathcal{S}_{\mu}$-GH if and only if for every $\epsilon>0$ there exists $C_{\epsilon}>0$ such that
$$
\inf_{\tau \in \Z} |\tau - \alpha (2j + 1) | \geq C_{\epsilon}\exp
 \left(-\epsilon j^{\frac{1}{2 \mu}}\right), \ j \to \infty.
$$
In particular, it is evident from the latter formula that $\mathcal{L}$ is not $\mathcal{S}_{\mu}$-GH when $\alpha \in \Z$.
\end{example}

As an immediate consequence of Theorem \ref{main_const} we obtain the following  necessary condition for the global hypoellipticity.

\begin{corollary} \label{nec_cond}
	If  $\mathcal{L}$ is $\mathcal{S}_{\mu}$-GH, then the set
	$\mathcal{W}$ defined by \eqref{W}
	is finite.
\end{corollary}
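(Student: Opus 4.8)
The plan is to deduce the statement directly from the characterization in Theorem \ref{main_const}, since the finiteness of $\mathcal{W}$ is already implicit in its hypotheses. First I would assume that $\mathcal{L}$ is $\mathcal{S}_{\mu}$-GH, so that by Theorem \ref{main_const} one of two situations occurs: either $\beta \neq 0$, or $\beta = 0$ together with the requirement that $\alpha$ be not a $\mu$-exponential Liouville number with respect to $\{\lambda_j\}$. I would then treat these two cases separately, showing in each that $\mathcal{W} = \{j \in \N : (\alpha + i\beta)\lambda_j \in \Z\}$ can contain only finitely many indices.

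In the case $\beta \neq 0$, the observation is that $(\alpha + i\beta)\lambda_j \in \Z$ forces its imaginary part $\beta\lambda_j$ to vanish; since $\beta \neq 0$ this means $\lambda_j = 0$. Because the eigenvalues satisfy $|\lambda_j| \to \infty$, only finitely many of them can equal zero, and hence $\mathcal{W}$ is finite. In the case $\beta = 0$ with $\alpha$ not a $\mu$-exponential Liouville number, I would fix some $\epsilon > 0$ and use Definition \ref{def-non-exp-Liou} to obtain a constant $C_{\epsilon} > 0$ and an index $j_0$ such that $\inf_{\tau \in \Z}|\tau - \alpha\lambda_j| \geq C_{\epsilon} \exp(-\epsilon j^{1/(2n\mu)}) > 0$ for all $j \geq j_0$. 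In particular $\alpha\lambda_j \notin \Z$ for every $j \geq j_0$, so that $\mathcal{W} \subseteq \{0, 1, \ldots, j_0 - 1\}$ is finite. Since the two cases exhaust the possibilities granted by Theorem \ref{main_const}, this proves the claim.

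There is no serious obstacle here: the corollary is genuinely immediate once Theorem \ref{main_const} is in hand, and indeed the finiteness of $\mathcal{W}$ was already recorded at the start of that theorem's sufficiency proof. The only point requiring a little care is the reading of the phrase ``as $j \to \infty$'' in \eqref{non-exp-Liou} as a quantification over all sufficiently large $j$, which is precisely what allows one to conclude that the exceptional set $\mathcal{W}$ is bounded, and therefore finite.
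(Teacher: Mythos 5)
Your argument is correct and is essentially the paper's own: the authors state the corollary as an immediate consequence of Theorem \ref{main_const}, having already recorded at the start of that theorem's sufficiency proof that either condition a) or condition b) forces $\mathcal{W}$ to be finite, which is exactly the case analysis you carry out. Your explicit treatment of the two cases (the imaginary part $\beta\lambda_j$ must vanish when $\beta\neq 0$, and the non-Liouville condition keeps $\alpha\lambda_j$ away from $\Z$ for large $j$ when $\beta=0$) fills in the details the paper leaves implicit, with no gaps.
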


\subsection{Operators with time dependent coefficients  \label{sec-variable_coeff}}

Let us turn back our attention to the general operator 
$$
L = D_t + c(t)P(x,D_x)
$$
as defined in \eqref{general_operator}, where $c(t) = a(t) + ib(t)$ for some real valued functions $a,b \in \mathcal{G}^\sigma (\mathbb{T})$. Set $c_0 = a_0 + i b_0$, where
$$
a_0 = (2 \pi)^{-1} \int_{0}^{2 \pi} a(t) dt \ \textrm{ and } \ 
b_0 = (2 \pi)^{-1} \int_{0}^{2 \pi} b(t) dt.
$$
First of all we extend the necessary condition stated in Corollary \ref{nec_cond} to the case of time dependent coefficients.

\begin{proposition}\label{Z_finite}
	If the operator $L$ is $\mathcal{S}_{\mu}$-globally hypoelliptic, then the set
	 $\mathcal{Z} = \{j \in \mathbb{N}; \, \lambda_j c_0 \in\mathbb{Z} \}$  is finite.
\end{proposition}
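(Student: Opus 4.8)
The plan is to argue by contraposition, mirroring the necessity part of Theorem \ref{main_const} but now building the homogeneous solutions from the full coefficient $c(t)$ rather than from a constant. Assuming $\mathcal{Z}$ is infinite, I would exhibit an element $u \in \mathscr{U}_\mu(\T\times\R^n)\setminus \mathscr{F}_\mu(\T\times\R^n)$ with $Lu=0\in\mathscr{F}_\mu(\T\times\R^n)$, which shows that $L$ is not $\mathcal{S}_\mu$-GH. A preliminary reduction is useful: for $j\in\mathcal{Z}$ the integer $\lambda_j c_0=\lambda_j a_0+i\lambda_j b_0$ is real, so $\lambda_j b_0=0$, and since $|\lambda_j|\to\infty$ this forces $b_0=0$. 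Consequently $B(t):=\int_0^t b(r)\,dr$ satisfies $B(2\pi)=2\pi b_0=0=B(0)$, so it defines a bounded $2\pi$-periodic function on $\T$.

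Next, for each $j\in\mathcal{Z}$ I would set
\[
u_j(t)=c_j\exp\!\left(-i\lambda_j\int_0^t c(r)\,dr\right),\qquad c_j=\left\|\exp\!\left(\lambda_j B(\cdot)\right)\right\|_{L^\infty(\T)}^{-1},
\]
and $u_j\equiv 0$ for $j\notin\mathcal{Z}$. Differentiating gives $\partial_t u_j+i\lambda_j c(t)u_j=0$, while $\lambda_j c_0\in\Z$ yields $\exp(-2\pi i\lambda_j c_0)=1$, so $u_j$ is $2\pi$-periodic; since $c\in\mathcal{G}^\sigma(\T)$ and the Gevrey classes are stable under primitives and under composition with the exponential, $u_j\in\mathcal{G}^\sigma(\T)$. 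The role of $c_j$ is visible once we write $-i\lambda_j\int_0^t c=\lambda_j B(t)-i\lambda_j\int_0^t a$, so that $|u_j(t)|=c_j\exp(\lambda_j B(t))$ and hence $\sup_{t\in\T}|u_j(t)|=1$ for every $j\in\mathcal{Z}$, regardless of the sign of $\lambda_j$.

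Now I would assemble $u=\sum_{j\in\mathcal{Z}}u_j\varphi_j$. To see $u\in\mathscr{U}_\mu(\T\times\R^n)$ I would invoke Theorem \ref{theorm-seq}: since each $u_j$ is a smooth function, $|\langle u_j,\psi\rangle|=|\int_\T u_j\psi\,dt|\leq 2\pi\sup_t|u_j|\,\sup_t|\psi|\leq 2\pi\|\psi\|_{\sigma,h}$, so condition \eqref{estimete-distr} holds with $C_{\epsilon,h}=2\pi$ because $\exp(\epsilon j^{1/2n\mu})\geq 1$. By construction $iLu=\sum_j(\partial_t u_j+i\lambda_j c(t)u_j)\varphi_j=0$, hence $Lu=0\in\mathscr{F}_\mu(\T\times\R^n)$. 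Finally, $u\notin\mathscr{F}_\mu(\T\times\R^n)$: were $u\in\mathcal{S}_{\theta,\mu}$ for some $\theta>1$, Theorem \ref{charGSspaces} would force its coefficients to obey \eqref{deccoeff}, in particular $\sup_t|u_j|\leq C\exp(-\epsilon j^{1/2n\mu})$ for some $\epsilon>0$, which is impossible since $\sup_t|u_j|=1$ for the infinitely many $j\in\mathcal{Z}$. Thus $L$ is not $\mathcal{S}_\mu$-GH, and the contrapositive gives the claim.

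The main obstacle I anticipate is exactly the exponential factor $\exp(\lambda_j B(t))$ coming from the imaginary part of $c$. Without the normalizing constant $c_j$, the homogeneous solutions grow in $t$ like $\exp(c|\lambda_j|)\sim\exp(c'j^{m/2n})$, a rate far too fast (since $m\mu\geq 1$ forces $m/2n> 1/2n\mu$) to be absorbed by the ultradistribution bound $\exp(\epsilon j^{1/2n\mu})$, so the series would fail to define an element of $\mathscr{U}_\mu$. Normalizing $\sup_t|u_j|$ to $1$ both resolves this and guarantees the failure of the decay \eqref{deccoeff}; the only other point needing care is the reduction $b_0=0$, which is what makes $B$ periodic and the normalization legitimate.
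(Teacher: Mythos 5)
Your proposal is correct and follows essentially the same route as the paper: both construct normalized homogeneous solutions $u_j(t)=c_j\exp\bigl(-i\lambda_j\int_0^t c(r)\,dr\bigr)$ for $j\in\mathcal{Z}$, with your $c_j=\|\exp(\lambda_j B(\cdot))\|_{L^\infty(\T)}^{-1}$ being exactly the paper's choice $c_\ell=\exp\bigl(-\lambda_{j_\ell}\int_0^{t_\ell}\Im c(r)\,dr\bigr)$ evaluated at the maximizing point $t_\ell$, so that $\sup_t|u_j|=1$ while $|u_j|\le 1$. The only additions are your explicit (and correct) observations that $\mathcal{Z}$ infinite forces $b_0=0$ and that $\lambda_j c_0\in\Z$ gives periodicity, details the paper leaves implicit.
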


\begin{proof} If $\mathcal{Z}$ is infinite, then there exists an increasing sequence $\{j_{\ell}\}_{\ell \in \N}$ such that $c_0 \lambda_{j_{\ell}} \in \mathbb{Z}.$
	Set
	$$
	c_{\ell}=\exp\left({- \lambda_{j_{\ell}} \int_{0}^{t_\ell}\Im c(r)dr}\right),
	$$	
	where $t_\ell\in[0,2\pi]$ satisfies
	$$
	\int_{0}^{t_\ell} \lambda_{j_{\ell}} \Im c(r)dr=\max_{t\in[0,2\pi]}
	\int_{0}^{t} \lambda_{j_{\ell}} \Im c(r)dr.
	$$
	
	For each $j_{\ell}$ the function
	$$
	{u}_{\ell}(t)=c_\ell \exp\left({-i \lambda_{j_{\ell}} \int_{0}^{t}c(r)dr}\right)
	$$
	belongs to $\mathscr{G}^{\sigma}(\T)$ and satisfies the equation
	\begin{equation*}
		\partial_t{u}_{\ell}(t)+ic(t)\lambda_{j_{\ell}}
		{u}_{\ell}(t)=0.
	\end{equation*}
	
	Moreover, $|{u}_{\ell}(t)|\leqslant 1,$ for all $t\in[0,2\pi],$ and $|u_{\ell}(t_{\ell})|=1.$ Hence,
	\begin{equation*}
		\{u_{\ell}(t)\}  \rightsquigarrow u \in  \mathscr{U}_\mu(\T \times \R^n)\setminus \mathscr{F}_\mu(\T \times \R^n),
	\end{equation*}
	and $Lu=0.$ Therefore, $L$ is not $\mathcal{S}_{\mu}$-GH. 
	
\end{proof}

\begin{theorem}\label{main-variable}
The operator $L$ is $\mathcal{S}_{\mu}$-globally hypoelliptic if and only if one of the following conditions holds:

\begin{enumerate}
	\item [a)] $b$ is not identically zero and $b$ does not change sign;
	
	\item [b)] $b \equiv 0$ and $a_0$ is not a $\mu$-exponential Liouville number with respect to sequence $\{\lambda_{j}\}$.
\end{enumerate}
\end{theorem}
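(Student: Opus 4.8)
The plan is to reduce everything, via Proposition \ref{solutions}, to uniform estimates on the coefficients $u_j(t)$ and their $t$-derivatives, and then to read off the conclusion from the characterization in Theorem \ref{charGSspaces}: the operator $L$ is $\mathcal{S}_\mu$-GH precisely when the unique solutions $u_j$ of \eqref{diffe-equations} satisfy $\sup_{t}|\partial_t^\gamma u_j(t)|\leq C^{\gamma+1}(\gamma!)^\theta\exp(-\epsilon j^{1/2n\mu})$ for some $\theta>1$ and $\epsilon>0$. Throughout I may assume $f\in\mathcal{S}_{\sigma,\mu}$ by the remark following Definition \ref{def-GH-2}. I would split the argument into sufficiency (conditions (a) or (b) imply GH) and necessity (failure of both produces a singular solution), the latter decomposing into the two genuinely different mechanisms, $b$ changing sign and $b\equiv 0$ with $a_0$ a $\mu$-exponential Liouville number.

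For sufficiency I first observe that in both cases the set $\mathcal{Z}=\{j:\lambda_jc_0\in\Z\}$ of Proposition \ref{Z_finite} is finite: under (a) because $b_0=(2\pi)^{-1}\int_0^{2\pi}b\neq 0$ forces $\lambda_jc_0\notin\Z$ whenever $\lambda_j\neq 0$, and under (b) because the non-Liouville condition \eqref{non-exp-Liou} keeps $\inf_{\tau}|\tau-a_0\lambda_j|$ strictly positive for large $j$. Hence the representations \eqref{Solu-1}--\eqref{Solu-2} are available for all large $j$. The heart of the matter is a $C^0$ bound $\sup_t|u_j|\leq C\exp(-\epsilon_1 j^{1/2n\mu})$. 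Since the exponential kernel in \eqref{Solu-1} (resp. \eqref{Solu-2}) has modulus $\exp(\lambda_j\int_{t-s}^{t}b)$ (resp. $\exp(-\lambda_j\int_{t}^{t+s}b)$), whenever $b$ has constant sign I can choose the representation according to the signs of $b$ and of $\lambda_j$ so that this factor is $\leq 1$; at the same time the corresponding denominator tends to $1$ (case (a), where $b_0\neq 0$ drives $|e^{\mp 2\pi i\lambda_jc_0}|$ to $0$) or is bounded below by $C_\epsilon\exp(-\epsilon j^{1/2n\mu})$ via Lemma \ref{lt2} and the non-Liouville hypothesis (case (b), where $c$ is real so the kernel already has modulus $1$). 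I would then upgrade to all $t$-derivatives by feeding this estimate into the recursion $\partial_t^{\gamma+1}u_j=\partial_t^\gamma f_j-i\lambda_j\sum_{\ell=0}^{\gamma}\binom{\gamma}{\ell}(\partial_t^\ell c)(\partial_t^{\gamma-\ell}u_j)$ furnished by the equation itself, closing it with the ansatz $\sup_t|\partial_t^\gamma u_j|\leq K^\gamma(\gamma!)^\sigma|\lambda_j|^\gamma\exp(-\epsilon_1 j^{1/2n\mu})$. Finally the spurious factor $|\lambda_j|^\gamma\sim j^{\gamma m/2n}$ is absorbed into the Gevrey order by Lemma \ref{lemma-exp-j} (applied with $s=2n\mu$), at the price of replacing $\sigma$ by $\theta=\sigma+m\mu$, which gives $u\in\mathcal{S}_{\theta,\mu}\subset\mathscr{F}_\mu$; this increase of the Gevrey index is exactly the loss recorded in Remark \ref{remark-loss-variable}.

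For necessity I argue by contraposition, so I assume that neither (a) nor (b) holds, which forces either $b$ to change sign, or $b\equiv 0$ and $a_0$ to be $\mu$-exponential Liouville. If $b$ changes sign, then $B(t):=\int_0^t b(r)\,dr$ has a strict interior local maximum $t^{*}$ (where $b$ passes from $+$ to $-$) and a strict interior local minimum $t_{*}$; since $|\lambda_j|\to\infty$, at least one of $\{j:\lambda_j>0\}$, $\{j:\lambda_j<0\}$ is infinite. Working on that subsequence and using a Gevrey cutoff $\chi\in\mathcal{G}^{\sigma}(\T)$ (available because $\sigma>1$) equal to $1$ near the relevant extremum and supported where $B-B(t^{*})$ (resp. $B-B(t_{*})$) is bounded away from $0$ with the sign that produces decay, I set $u_j=\chi(t)\exp(-i\lambda_j\int_{t^{*}}^{t}c(r)\,dr)$ and $u_j\equiv 0$ off the subsequence. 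Then $u_j$ is $2\pi$-periodic, $|u_j(t^{*})|=1$ so that $u\notin\mathscr{F}_\mu$, while the bulk cancels and the coefficients of $Lu$ reduce to $\chi'(t)\exp(-i\lambda_j\int_{t^{*}}^{t}c)$, of modulus $\leq\|\chi'\|_{\infty}\exp(-\kappa|\lambda_j|)$ on $\mathrm{supp}\,\chi'$; this genuine exponential gain (faster than $\exp(-\epsilon j^{1/2n\mu})$ by \eqref{weyl}) dominates the $|\lambda_j|^\gamma$ created by differentiation, whence $Lu\in\mathscr{F}_\mu$ and $\{u_j\}\rightsquigarrow u\in\mathscr{U}_\mu$ by Theorem \ref{theorm-seq}. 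If instead $b\equiv 0$ and $a_0$ is Liouville, I reproduce the device of the necessity part of Theorem \ref{main_const}: picking $\tau_\ell\in\Z$ and $j_\ell$ with $|\tau_\ell-a_0\lambda_{j_\ell}|$ super-exponentially small, I take $u_{j_\ell}(t)=\exp(-i\lambda_{j_\ell}\int_0^{t}(a-a_0)\,dr-i\tau_\ell t)$, periodic of modulus $1$ (so $u\notin\mathscr{F}_\mu$), for which a direct computation gives $f_{j_\ell}=i(a_0\lambda_{j_\ell}-\tau_\ell)u_{j_\ell}$ with super-exponentially decaying $t$-derivatives after absorbing the $|\lambda_{j_\ell}|^\gamma$ factors through Lemma \ref{lemma-exp-j}; hence $Lu=f\in\mathscr{F}_\mu$ and $L$ is not $\mathcal{S}_\mu$-GH.

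The step I expect to be most delicate is the necessity when $b$ changes sign: one must simultaneously guarantee $2\pi$-periodicity of the concentrated family, keep $\mathrm{supp}\,\chi'$ inside the region where $B-B(t^{*})$ carries the sign that forces $Lu$ to decay for the relevant sign of $\lambda_j$, and verify that the gain $\exp(-\kappa|\lambda_j|)$ really overrides all the $|\lambda_j|^\gamma$ factors uniformly in $\gamma$. The sufficiency is more mechanical; its only subtle point is the bookkeeping that converts the crude $|\lambda_j|^\gamma$ growth into a controlled, finite loss of Gevrey regularity in $t$.
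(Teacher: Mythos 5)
Your proposal is correct in substance and reproduces the paper's overall architecture: the same four-way decomposition (sufficiency of (a) via the integral formulas \eqref{Solu-1}--\eqref{Solu-2} with the representation chosen according to the signs of $b$ and $\lambda_j$; sufficiency of (b) via Lemma \ref{lt2}; necessity via a solution concentrated at an extremum of $\int b$ when $b$ changes sign; necessity via Diophantine approximants when $b\equiv 0$). Two of your devices genuinely differ from the paper's. First, you propagate the $C^0$ bound to all $t$-derivatives through the recursion $\partial_t^{\gamma+1}u_j=\partial_t^{\gamma}f_j-i\lambda_j\sum_{\ell}\binom{\gamma}{\ell}(\partial_t^{\ell}c)(\partial_t^{\gamma-\ell}u_j)$, whereas the paper differentiates the integral representation directly via Leibniz and Fa\`a di Bruno; your induction does close (with the ansatz $R K^{\gamma}(\gamma!)^{\sigma}|\lambda_j|^{\gamma}e^{-\epsilon_1 j^{1/2n\mu}}$ and $K$ large), but it records the loss as $(\gamma!)^{\sigma+m\mu}$ rather than the paper's sharper $(\gamma!)^{\max\{\sigma,m\mu\}}$ --- immaterial for hypoellipticity, though it slightly overstates the loss in Remark \ref{remark-loss-variable}. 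Second, for the Liouville necessity you use the explicit periodic quasi-eigenfunctions $u_{j_\ell}=\exp(-i\lambda_{j_\ell}\int_0^t(a-a_0)\,dr-i\tau_\ell t)$, which is cleaner than the paper's route through Theorem \ref{LGHimpliesL_0GH} (a cutoff-based reduction to the averaged operator $L_0$, stated for general complex $c$); your shortcut is legitimate precisely because in the residual case one already knows $b\equiv 0$, so the exponent is purely imaginary and periodicity is automatic. The one place where you are looser than the paper is the choice of the concentration point in the change-of-sign case: a \emph{strict local} maximum of $B(t)=\int_0^t b$ is not enough --- you need $\mathcal{B}_{t^*}(t)=\int_{t^*}^{t}b\leq 0$ on an entire fundamental domain and $\leq -c^*<0$ on $\mathrm{supp}\,\chi'$, which is exactly what the paper's Lemma \ref{lamma-chang.sig} supplies (essentially by taking $t^*$ to realize the maximum of $B$ over a period); you correctly flag this as the delicate step, and with that lemma in hand your construction coincides with the paper's Theorem \ref{change_sign}.
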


The proof of this result will follow by combining Theorems  \ref{variable-case}, \ref{LGHimpliesL_0GH}, \ref {change_sign} and Proposition \ref{L_0GHimpliesLGH} presented in the sequel.

\smallskip

Our next step is the analysis of sufficiency part of item $a)$ in Theorem \ref{main-variable}.

\begin{theorem}\label{variable-case}
	If $b(\cdot)$ does not change sign and is not identically zero, then $L$ is $\mathcal{S}_{\mu}$-globally hypoelliptic.
\end{theorem}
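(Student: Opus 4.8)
The plan is to reduce, exactly as in Proposition~\ref{solutions}, to the family of ODEs \eqref{diffe-equations} and to estimate the unique solutions $u_j$ for all but finitely many $j$; the remaining terms form a finite sum of $\mathcal{G}^\sigma(\T)$-functions and are harmless. By the Remark following Definition~\ref{def-GH-2} I may assume $Lu = f$ with $f \in \mathcal{S}_{\sigma,\mu}$, $\sigma$ being the Gevrey order of $c$. Assume without loss of generality that $b \geq 0$ and $b \not\equiv 0$, so that $b_0 = (2\pi)^{-1}\int_0^{2\pi} b(t)\,dt > 0$ (the case $b \leq 0$ is symmetric). Since $\Im(\lambda_j c_0) = \lambda_j b_0 \neq 0$ whenever $\lambda_j \neq 0$, the set $\mathcal{Z}$ is finite (this is also Proposition~\ref{Z_finite}), so for $j$ large the solutions are given by the explicit formulas \eqref{Solu-1} and \eqref{Solu-2}.

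The conceptual crux is to choose the formula according to the sign of $\lambda_j$. Writing $E_j^{(1)}(t,s) = \exp(-i\lambda_j\int_{t-s}^t c(r)\,dr)$ and $E_j^{(2)}(t,s) = \exp(i\lambda_j\int_t^{t+s} c(r)\,dr)$, one computes $|E_j^{(1)}(t,s)| = \exp(\lambda_j\int_{t-s}^t b(r)\,dr)$ and $|E_j^{(2)}(t,s)| = \exp(-\lambda_j\int_t^{t+s} b(r)\,dr)$. Here the hypothesis that $b$ does not change sign enters decisively: since $b \geq 0$, the integrals $\int_{t-s}^t b$ and $\int_t^{t+s} b$ are nonnegative for every $t$ and every $s \in [0,2\pi]$. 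Hence, using \eqref{Solu-1} when $\lambda_j < 0$ and \eqref{Solu-2} when $\lambda_j > 0$, the exponential kernel has modulus $\leq 1$. Moreover the denominators in \eqref{Theta_j-Gamma_j} satisfy $\Theta_j \to 1$ (as $\lambda_j \to -\infty$) and $\Gamma_j \to 1$ (as $\lambda_j \to +\infty$), because $|e^{\mp 2\pi i \lambda_j c_0}| = e^{\pm 2\pi\lambda_j b_0} \to 0$; in particular they are uniformly bounded for $j$ large.

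It remains to control the $t$-derivatives, and since both the kernel and $f_j$ depend on $t$, I would apply the Leibniz rule together with a Fa\`a di Bruno estimate for $\partial_t^k E_j$. Writing $E_j = e^{\phi_j}$, one has $|\partial_t^i \phi_j(t,s)| \leq 2|\lambda_j|\,\|c^{(i-1)}\|_{L^\infty}$ for $i \geq 1$, and since $c \in \mathcal{G}^\sigma(\T)$ the standard Gevrey bound for the exponential of a Gevrey function gives
\[
|\partial_t^k E_j(t,s)| \leq |E_j(t,s)|\,(C|\lambda_j|)^k (k!)^\sigma \leq (C|\lambda_j|)^k (k!)^\sigma .
\]
The extra factor $|\lambda_j|^k$ is precisely the source of the loss of regularity announced in Remark~\ref{remark-loss-variable}. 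Combining this with the decay of $f_j$ from Theorem~\ref{charGSspaces}, namely $\sup_t|\partial_t^{\gamma-k} f_j(t)| \leq C^{\gamma-k+1}((\gamma-k)!)^\sigma \exp(-\epsilon_0 j^{1/2n\mu})$, and the elementary bound $\binom{\gamma}{k}(k!\,(\gamma-k)!)^\sigma \leq (\gamma!)^\sigma$, the Leibniz sum yields
\[
\sup_t|\partial_t^\gamma u_j(t)| \leq C^{\gamma+1}(\gamma!)^\sigma (C|\lambda_j|)^\gamma \exp(-\epsilon_0 j^{1/2n\mu}).
\]

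Finally I would absorb the growth $|\lambda_j|^\gamma \sim (Cj^{m/2n})^\gamma$ (by the Weyl asymptotics \eqref{weyl}) into the exponential decay: splitting $\exp(-\epsilon_0 j^{1/2n\mu})$ into two equal factors and applying Lemma~\ref{lemma-exp-j} with $s = 2n\mu$ and exponent $m/2n$ gives $j^{\gamma m/2n}\exp(-\tfrac{\epsilon_0}{2} j^{1/2n\mu}) \leq C^\gamma (\gamma!)^{m\mu}$. Thus
\[
\sup_t|\partial_t^\gamma u_j(t)| \leq C^{\gamma+1}(\gamma!)^{\sigma + m\mu}\exp\left(-\tfrac{\epsilon_0}{2} j^{1/2n\mu}\right),
\]
and since $u_j \in \mathcal{G}^\sigma(\T) \subseteq \mathcal{G}^{\sigma+m\mu}(\T)$ by Proposition~\ref{solutions}, Theorem~\ref{charGSspaces} shows $u \in \mathcal{S}_{\sigma+m\mu,\mu} \subset \mathscr{F}_\mu(\T\times\R^n)$, so $L$ is $\mathcal{S}_\mu$-GH. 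The main obstacle is exactly this last balancing: each $t$-derivative of the kernel unavoidably produces a factor $|\lambda_j|$, and the argument closes only because the Gelfand-Shilov decay of $f_j$ in $j$ is strong enough to absorb $|\lambda_j|^\gamma$, at the price of enlarging the Gevrey order in $t$ from $\sigma$ to $\sigma + m\mu$.
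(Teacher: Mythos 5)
Your proposal is correct and follows essentially the same route as the paper: finiteness of $\mathcal{Z}$ from $b_0\neq 0$, choice of \eqref{Solu-1} or \eqref{Solu-2} according to the signs so that the exponential kernel has modulus at most $1$ and $\Theta_j,\Gamma_j$ stay bounded, a Leibniz/Fa\`a di Bruno estimate producing the factor $|\lambda_j|^k$, and absorption of that factor into the exponential decay of $f_j$ via Lemma \ref{lemma-exp-j}. The only difference is bookkeeping: by tracking that the power of $\lambda_j$ equals the number of blocks $k$ in the Fa\`a di Bruno sum (paired with $[(\ell-k)!]^\sigma$ rather than $(\ell!)^\sigma$), the paper lands in $\mathcal{S}_{\max\{\sigma,m\mu\},\mu}$ instead of your $\mathcal{S}_{\sigma+m\mu,\mu}$, but both suffice for $\mathcal{S}_\mu$-global hypoellipticity.
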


\begin{proof}  By hypothesis we have $b_0 \neq0$, then the set $\mathcal{Z}$ is finite and we may consider expressions  \eqref{Solu-1}	and \eqref{Solu-2}, for $j$ large enough. Moreover,  there exist positive constants 
	$C_1$ and $C_2$ such that
	$$
	0 < C_1 \leq \Gamma_j \leq 1  \ \textrm{ and }  0 < C_2 \leq \Theta_j \leq 1,
	$$
	for $\Theta_j$ and $\Gamma_j$ as defined in \eqref{Theta_j-Gamma_j}. Also, let us admit that  $\lambda_j>0$, for all $j \in \N$. Otherwise, we may interchange the use of   \eqref{Solu-1}	and \eqref{Solu-2}.
	
	Now, by assuming  $b(\cdot) \leq 0$ we may consider expression \eqref{Solu-1}. Set 
	$$
	\mathcal{H}(t,s) = \exp \left( -i\lambda_j \int_{t-s}^t c(r)\, dr \right).
	$$
	It follows from Leibniz rule that
\begin{align*}
\partial_t^{\gamma} u_j(t) = \frac{1}{1-e^{-2\pi i \lambda_j c_0}}\sum_{\ell=0}^{\gamma} \binom{\gamma}{\ell}\int_0^{2\pi} \partial_t^\ell\mathcal{H}(t,s) \, \partial_t^{\gamma - \ell} f_j(t-s) ds.	
\end{align*}	
and from  Fa\`a di Bruno formula
$$
\partial_t^\ell  \mathcal{H}(t,s) = \sum_{\Delta(k), \, \ell}
 (-i\lambda_j)^k
 \frac{\ell!}{\ell_1! \cdots \ell_k! }
\left(\prod_{\nu=1}^k
\partial_t^{\ell_\nu-1}(c(t)-c(t-s)) \right)
\mathcal{H}(t,s),
$$	
where
$
\sum\limits_{\Delta(k), \, \ell} = \sum\limits_{k=1}^\ell\sum\limits_{\stackrel{\ell_1+\ldots+\ell_k=\ell}{\ell_\nu \geq 1, \forall \nu}}.
$
Therefore, since
$$\left| \prod_{\nu=1}^k
\partial_t^{\ell_\nu-1}(c(t)-c(t-s))  \right| \leq C^{\ell-k+1}[(\ell -k)!]^\sigma$$
and $\lambda_{j} \leq C_4 j^{m/2n}$ by \eqref{weyl}, we get
\begin{eqnarray}
	|\partial_t^{\gamma} u_j(t)| 
	& \leq & 2 \pi C_2 \sum_{\ell=0}^{\gamma}\left\{ \binom{\gamma}{\ell}
	A_2^{\ell} \sum_{\Delta(k), \, \ell} \frac{\ell!}{\ell_1! \cdots \ell_k! }j^{\frac{km}{2n}} C^{\ell-k+1}[(\ell -k)!]^\sigma \right. \nonumber
	\\ && \times \left.  \sup_{\tau \in [0,2\pi]}
	|\partial_t^{\gamma - \ell} f_j(\tau)| \right\},  \label{sgrunt}
\end{eqnarray}
since $\exp\left(\lambda_j \int_{t-s}^{t} b(r) dr\right) <1$. We obtain an estimate of the same form for 
$b(\cdot) \geq 0$ using \eqref{Solu-2}.

Now, there exist $\epsilon_0 >0$ and $C_3>0$ such that
\begin{equation} \label{sbuff2}
\sup_{\tau \in [0,2\pi]} | \partial_t^{\gamma-\ell} f_j(\tau)| \leq	C_3^{\gamma-\ell+1} [(\gamma-\ell)!]^{\sigma} \exp \left(-\epsilon_0 j^{\frac{1}{2n\mu}} \right).
\end{equation}
Moreover, in view of Lemma \ref{lemma-exp-j} we have
\begin{equation} \label{puff}
	j^{k m/2n} 
	\leq C_{\epsilon_0}^{k} (k !)^{m \mu} \exp \left(\frac{\epsilon_0}{2} j^{\frac{1}{2n\mu}} \right).
\end{equation}


Combining \eqref{sgrunt}, \eqref{sbuff2} and \eqref{puff} we obtain
	$$
|\partial_t^{\gamma} u_j(t)|
\leq C^{\gamma + 1} (\gamma!)^{\max\{\sigma, m\mu\}} \exp\left(-\frac{\epsilon_0}{2} j^{\frac{1}{2n\mu}}
	\right).
$$

Therefore, $u \in  \mathcal{S}_{\max\{\sigma, m\mu\}, \, \mu}$
which implies that $L$ is $\mathcal{S}_{\mu}$-GH.

\end{proof}

\begin{remark}\label{remark-loss-variable}
In contrast with the case when $c(t)$ is constant (see Remark \ref{remark-lost-constant}) we emphasize the possible  
loss of regularity on the variable $t$, namely, for $Lu \in \mathcal{S}_{\sigma, \mu}$ we get $u \in  \mathcal{S}_{\max\{\sigma, m\mu\}, \, \mu}$. This loss depends on $m$, the order of operator $P$, and $\mu$ (the Gelfand-Shilov regularity in $\R^n$).
\end{remark}

In the next result we show that the global hypoellipticity of the operator
$$
L_0 = D_t + c_0P(x,D_x), \ c_0 = (2\pi)^{-1}\int_{0}^{2\pi}c(t)dt.
$$
is a necessary condition for the global hypoellipticity of the operator $L$.

\begin{theorem} \label{LGHimpliesL_0GH}
	If $L$ is $\mathcal{S}_{\mu}$-globally hypoelliptic, then $L_0$ is $\mathcal{S}_{\mu}$-globally hypoelliptic.
\end{theorem}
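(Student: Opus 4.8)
The plan is to work on the Fourier side furnished by Theorems \ref{charGSspaces} and \ref{theorm-seq} and to exhibit an explicit intertwining between the coefficient equations of $L_0$ and those of $L$. Writing $c(t)=a(t)+ib(t)$ and $c_0=a_0+ib_0$, the function $C(t)=\int_0^t\big(c(r)-c_0\big)\,dr$ is $2\pi$-periodic, because $c-c_0$ has zero average, and it belongs to $\mathcal{G}^\sigma(\T)$. First I would introduce, for each $j$, the integrating factor $\Psi_j(t)=\exp\big(-i\lambda_j C(t)\big)$ and check by a direct computation (Leibniz applied to $\Psi_j u_j$, together with $\Psi_j'=-i\lambda_j(c(t)-c_0)\Psi_j$) that if $u_j$ solves $\partial_t u_j+i\lambda_j c_0 u_j=f_j$, then $\tilde u_j:=\Psi_j u_j$ solves
$$\partial_t \tilde u_j+i\lambda_j c(t)\,\tilde u_j=\Psi_j f_j .$$
In other words, multiplication of the Fourier coefficients by $\Psi_j$ turns a solution of the $L_0$-equation into a solution of the $L$-equation with datum $\tilde f_j=\Psi_j f_j$, and multiplication by $\Psi_j^{-1}$ reverses the process.

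Granting that the map $u\mapsto\{\Psi_j u_j\}$ is an automorphism of both $\mathscr{F}_\mu(\T\times\R^n)$ and $\mathscr{U}_\mu(\T\times\R^n)$, the statement follows at once: given $u\in\mathscr{U}_\mu$ with $L_0u=f\in\mathscr{F}_\mu$, the conjugate $\tilde u$ lies in $\mathscr{U}_\mu$ and satisfies $L\tilde u=\tilde f\in\mathscr{F}_\mu$, hence $\tilde u\in\mathscr{F}_\mu$ by the $\mathcal{S}_\mu$-GH of $L$; multiplying back by $\Psi_j^{-1}$ returns $u\in\mathscr{F}_\mu$, which is exactly the $\mathcal{S}_\mu$-GH of $L_0$. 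To prove the automorphism property I would estimate $\partial_t^\gamma(\Psi_j u_j)$ by Leibniz and expand $\partial_t^\ell\Psi_j$ via the Faà di Bruno formula, exactly as in the proof of Theorem \ref{variable-case}: each derivative produces powers $\lambda_j^k\lesssim j^{km/2n}$ through \eqref{weyl}, and Lemma \ref{lemma-exp-j} absorbs these into factors $(k!)^{m\mu}$ at the cost of halving the decay rate in \eqref{deccoeff}. This keeps the coefficients inside a space $\mathcal{S}_{\theta,\mu}$ with $\theta=\max\{\sigma,m\mu\}$ and, dually through Theorem \ref{theorm-seq}, inside $\mathscr{U}_\mu$.

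The key point, and the main obstacle, is that this absorption only works when $\Psi_j$ remains uniformly bounded, since $|\Psi_j(t)|=\exp\big(\lambda_j\int_0^t(b(r)-b_0)\,dr\big)$. When $b$ is non-constant this modulus grows like $\exp(c\,j^{m/2n})$ on part of $\T$, which overwhelms both the Gelfand–Shilov decay $\exp(-\epsilon j^{1/2n\mu})$ and the admissible dual growth $\exp(\epsilon j^{1/2n\mu})$, so the conjugation escapes the spaces. I would therefore split into three cases. If $b_0\neq0$, then $L_0$ is $\mathcal{S}_\mu$-GH outright by Theorem \ref{main_const}, and nothing is to be proved. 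If $b_0=0$ and $b\equiv0$, then $C$ is real-valued, $\Psi_j$ is unimodular, and the clean conjugation above applies verbatim; this is the substantive case. Finally, if $b_0=0$ but $b\not\equiv0$, then $b$ has zero mean without vanishing identically, hence changes sign, and in that situation $L$ is not $\mathcal{S}_\mu$-GH (cf. Theorem \ref{change_sign}), so the hypothesis of the statement is void and the implication holds vacuously. Assembling the three cases yields the theorem, the only delicate estimate being the Faà di Bruno and Lemma \ref{lemma-exp-j} bookkeeping in the unimodular case, which is identical in spirit to Theorem \ref{variable-case}.
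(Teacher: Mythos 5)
Your conjugation $\tilde u_j=\Psi_j u_j$ with $\Psi_j(t)=\exp\bigl(-i\lambda_j\int_0^t(c(r)-c_0)\,dr\bigr)$ does intertwine the coefficient equations, and on the function side the Fa\`a di Bruno/Lemma \ref{lemma-exp-j} bookkeeping works: the factor $\exp(\delta j^{1/2n\mu})$ produced when you trade $\lambda_j^k\lesssim j^{km/2n}$ for $(k!)^{m\mu}$ is absorbed by the decay $\exp(-\epsilon j^{1/2n\mu})$ in \eqref{deccoeff}. The genuine gap is the claim that the same map is an automorphism of $\mathscr{U}_\mu$. On the dual side there is no decay to absorb the loss: the coefficients $u_j$ are only elements of $(\mathcal{G}^{\sigma'})'(\T)$ obeying \eqref{stimamancante}, and to estimate $\langle u_j,\Psi_j\psi\rangle$ you must measure $\Psi_j\psi$ in a Gevrey norm that $u_j$ actually controls. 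Since $|\partial_t^\ell\Psi_j|\sim A^\ell(\ell!)^{\sigma}|\lambda_j|^\ell$, the product $\Psi_j\psi$ lies in $\mathcal{G}^{\sigma',h_j}(\T)$ only with $h_j\sim\lambda_j\to\infty$, and \eqref{stimamancante} provides a constant $C_{\epsilon,h}$ for each fixed $h$ with no control whatsoever as $h\to\infty$; the alternative of absorbing $\lambda_j^\ell$ into $(\ell!)^{m\mu}$ raises the Gevrey order of the test function to $\sigma'+m\mu$, on which $u_j$ has no reason to act. So the asserted invariance of $\mathscr{U}_\mu$ is unproven (and doubtful for $u\in\mathcal{S}'_{\sigma',\mu}$ with $\sigma'<m\mu$, where conjugation can inject growth $\exp(\epsilon\lambda_j^{1/\sigma'})=\exp(\epsilon j^{m/(2n\sigma')})$ exceeding the admissible $\exp(\epsilon j^{1/(2n\mu)})$), and without it the chain ``$\tilde u\in\mathscr{U}_\mu$, $L\tilde u\in\mathscr{F}_\mu$, hence $\tilde u\in\mathscr{F}_\mu$'' cannot start. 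A second, smaller issue: your third case invokes Theorem \ref{change_sign}, which assumes $b$ does not vanish identically on any interval; a zero-mean $b\not\equiv0$ that vanishes on a subinterval is not covered by that statement (only one particular configuration is treated in the remark following it).

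Both problems disappear if you run your idea in contrapositive form, which is what the paper does. Assuming $L_0$ is not $\mathcal{S}_\mu$-GH forces $b_0=0$ and $a_0$ to be $\mu$-exponentially Liouville; the singular solution of $L_0$ produced in the proof of Theorem \ref{main_const} has coefficients $e^{-i\tau_\ell t}$, which are bounded unimodular functions rather than general ultradistributions. Conjugating these (or, as the paper does, prescribing $f_\ell=(1-e^{-2\pi ia_0\lambda_{j_\ell}})\exp\bigl(-i\lambda_{j_\ell}\int_{t_\ell}^tc(r)\,dr\bigr)\phi$ and solving for $u_\ell$) keeps $|u_\ell|$ bounded above and bounded below at $t_\ell$, so membership in $\mathscr{U}_\mu\setminus\mathscr{F}_\mu$ is immediate, and only the $\mathscr{F}_\mu$-side Fa\`a di Bruno estimate --- the part of your argument that is sound --- is needed. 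Choosing $t_\ell$ to maximize $\int_0^t\lambda_{j_\ell}\Im c(r)\,dr$ makes the exponential factor bounded by $1$ for arbitrary $b$ with $b_0=0$, so no appeal to Theorem \ref{change_sign} is required at all.
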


\begin{proof}	
	We follow the same argument as in the proof of Theorem 3.5 in \cite{AGKM}: we assume by contradiction that $L_0$ is not $\mathcal{S}_{\mu}$-GH and we prove that this implies that $L$ is not $\mathcal{S}_{\mu}$-GH. By Theorem \ref{main_const} it follows that $b_0 = 0$ and  $a_0$ is a $\mu$-exponential Liouville number with respect to the sequence $\{\lambda_j\}$. Since \eqref{non-exp-Liou} implies the condition ii) in Lemma \ref{lt2}, it follows that there exist $\epsilon_0>0$ and a sequence $\{j_{\ell}\}_{\ell \in \N}$ such that $j_{\ell}$ is strictly increasing, $j_{\ell}> \ell$, and 
	\begin{equation}\label{est5}
		|1-e^{-2\pi i a_0\lambda_{j_{\ell}}}| <
		\exp\left (-\epsilon_0 j_{\ell}^{\frac{1}{2n\mu}}\right ) \ \textrm{for all}\  \ell \in\mathbb{N}.
	\end{equation}
	
	By Proposition \ref{Z_finite} we may assume without loss of generality that $c_0\lambda_{j_{\ell}} \notin \Z$ for all $\ell.$ For the sake of simplicity, let us define
	$\mathcal{M}_{j}(t) = \lambda_j c(t)$. We can find a sequence $t_{\ell}\in [0,2\pi]$ such that
	$$
	\int_{0}^{t_{\ell}} \Im\mathcal{M}_{j_{\ell}}(r)dr = \max_{t\in[0,2\pi]} \int_{0}^{t}\Im\mathcal{M}_{j_{\ell}}(r)dr,
	$$
	from which we obtain 
	\begin{equation}\label{negative_IM}
		\int_{t_{\ell}}^{t} \Im \mathcal{M}_{j_{\ell}}(r)dr\leqslant 0, \ \forall 
		t\in[0,2\pi], \ \ell \in \N.
	\end{equation}
	
	Note that by possibly  passing to a subsequence, we may assume that there exists $t_{0}\in[0,2\pi]$ such that $t_{\ell}\rightarrow t_0,$ as $\ell\rightarrow\infty.$ 
	
	Let $I$ be a closed interval in $(0,2\pi)$ such that $t_0\not\in I$ and take $\phi \in\mathcal{G}^{\sigma}_{c}(I,\mathbb{R}),$ such that $0\leqslant \phi(t)\leqslant 1$ and $\int_{0}^{2\pi}\phi(t)dt>0.$ For each $\ell,$ let $f_{\ell}(t)$ be the $2\pi-$periodic extension of
	$$
	(1-e^{-2\pi ia_0 \lambda_{j_{\ell}}}) \exp\left(-\int_{t_{\ell}}^{t} i\mathcal{M}_{j_{\ell}}(r)dr\right)\phi(t), \ t\in[0,2\pi].
	$$

  	A similar approach as in the proof of Theorem \ref{variable-case} shows that the function
	$$
	E_{\ell}(t)=\exp\left(-\int_{t_{\ell}}^{t}i\mathcal{M}_{j_\ell}(r)dr\right)
	$$
	satisfies an estimate of the form
	$$
		|\partial_t^{\gamma} E_{\ell}(t)|  \leq C_{\varepsilon_0}A^{\gamma+1} (\gamma !)^{\max\{\sigma, m\mu \}}\exp \left( \frac{\epsilon_0}{2} j_{\ell}^{\frac{1}{2n\mu}}\right),
	$$
	for $\epsilon_0$ as  in \eqref{est5}. Hence
	\begin{equation*}
		|\partial^\gamma_t f_{\ell}(t)|  \leq 
		C_{\epsilon_0}CA_1^{\gamma} (\gamma!)^{\max\{\sigma, m\mu\}} \exp \left( -\frac{\epsilon_0}{2} j_{\ell}^{\frac{1}{2n\mu}}\right).
	\end{equation*}

    Therefore, 
    $$
    f(t,x)=\sum_{\ell=1}^{\infty} f_{\ell}(t) \varphi_{j_{\ell}}(x) \in \mathcal{S}_{\max\{\sigma, m\mu\}, \, \mu} \subset \mathscr{F}_\mu.
    $$

\smallskip
	
	Now, the  next step is to exhibit an element $u$ of $\mathscr{U}_\mu\setminus \mathscr{F}_\mu$ such that $iLu=f.$ For this, for every $\ell \in \N$, we set
	$$
	u_{\ell}(t) = \frac{1}{1 - e^{-  2 \pi i a_0 \lambda_{j_{\ell}}}} \int_{0}^{2\pi}\exp\left(-i\int_{t-s}^{t}\!\!\mathcal{M}_{j_\ell}(r) \, dr\right) f_{\ell}(t-s)ds,
	$$ 
	which is well defined since $a_0 \lambda_{j_{\ell}} \notin \Z$.
	
	Firstly, note that in case $t,s\in[0,2\pi]$ and  $t-s \geqslant 0$ we have
	\begin{align*}
		|u_{\ell}(t)|	& \leq \left|\frac{1}{ 1-e^{-2\pi ia_0\lambda_{j_{\ell}}}}
		\int_{0}^{2\pi} \exp\left(-\int_{t-s}^{t}i\mathcal{M}_{j_\ell}(r)dr\right){f}_{\ell}(t-s) ds\right| \\
		& \leq\int_{0}^{2\pi} \exp\left(\int_{t-s}^{t} \Im\mathcal{M}_{j_\ell}(r)dr + \int_{t_\ell}^{t-s}\Im\mathcal{M}_{j_\ell}(r)dr\right)ds \\
		& =\int_{0}^{2\pi} \exp\left(\int_{t_{\ell}}^{t}\Im\mathcal{M}_{j_\ell}(r)dr\right)ds \leq 2\pi,
	\end{align*}
 in view of  \eqref{negative_IM}.
	
	On the other hand,  assume	$t,s\in[0,2\pi]$ and   $t-s<0$. Since $f_{\ell}(t-s) = f_{\ell}(t-s+2\pi)$, for all $\ell$, we obtain
	\begin{align*}
		|u_{\ell}(t)|
		& \leq\int_{0}^{2\pi} \exp\left(\int_{t-s}^{t} \Im\mathcal{M}_{j_\ell}(r)dr + \int_{t_\ell}^{t-s+2\pi}\Im\mathcal{M}_{j_\ell}(r)dr\right)ds \\
		& =\int_{0}^{2\pi} \exp\left(\int_{t_{\ell}}^{t} \Im\mathcal{M}_{j_\ell}(r)dr + \int_{t-s}^{t-s+2\pi}\Im\mathcal{M}_{j_\ell}(r)dr\right)ds \\
& =  \int_{0}^{2\pi} \exp\left(\int_{t_{\ell}}^{t} \Im\mathcal{M}_{j_\ell}(r)dr\right)\, ds \leq 2\pi.
	\end{align*}
Therefore,   ${u}_{\ell}(\cdot)$ increases slowly and
	\begin{equation*}
		u = \sum_{\ell \in \N}^{\infty} {u}_{\ell}(t) \varphi_{j_\ell} \in \mathscr{U}_\mu(\T \times \R^n).
	\end{equation*}

	Let $I$ be the interval $[a,b] \subset (0, 2 \pi)$. If $t_0> b,$ then $t_{\ell}> b,$ for all $\ell$ sufficiently large, and
	$$
	|{u}_{\ell}(t_{\ell})|= 
	\int_{t_{\ell}- b}^{t_\ell- a}\phi(t_{\ell}-s)ds= \int_{0}^{2\pi}\phi(t)dt>0.
	$$
	On the other hand, if $t_0<a,$ then $t_{\ell} <a,$ for all $\ell$ sufficiently large, and 
	\begin{align*}
		|{u}_{\ell} (t_{\ell})| & =  
		\left|\int_{t_{\ell}- b +2\pi}^{t_{\ell}- a +2\pi} \exp\left(-\int_{t_{\ell}-s}^{t_{\ell}}i \mathcal{M}_{j_\ell}(r)dr\right)\right. \\
		& \times \left.\exp\left(-\int_{t_\ell}^{t_\ell-s+2\pi}i \mathcal{M}_{\ell}(r)dr\right) \phi(t_\ell -s + 2\pi)ds\right| \\
		& =\int_{0}^{2\pi} \phi(s)ds>0.
	\end{align*}
Hence $|{u}_{\ell} (t_{\ell})| >0$ and it is independent of $\ell$. This implies that $u_{\ell}(\cdot)$ does not satisfy \eqref{deccoeff}, hence
	$u \notin \mathscr{F}_\mu(\T \times \R^n)$. This contraddicts the hypothesis that $L$ is $\mathcal{S}_\mu$-GH.
	
\end{proof}

\begin{proposition}\label{L_0GHimpliesLGH}
	The operator $L = D_t + a(t)P(x,D_x)$ is $\mathcal{S}_{\mu}$-globally hypoelliptic if and only if the same holds true for $L_{a_0} = D_t + a_0P(x,D_x)$, that is, if and only if $a_0$ is not a $\mu$-exponential Liouville number with respect to  $\{\lambda_j\}$. 
\end{proposition}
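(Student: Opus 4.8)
The statement has three parts. The final ``that is'' clause is immediate: applying Theorem~\ref{main_const} to $\mathcal{L} = D_t + (\alpha + i\beta)P$ with $\beta = 0$ and $\alpha = a_0$ shows that $L_{a_0}$ is $\mathcal{S}_\mu$-GH precisely when $a_0$ is not a $\mu$-exponential Liouville number with respect to $\{\lambda_j\}$. Hence it suffices to establish the equivalence between the global hypoellipticity of $L$ and of $L_{a_0}$. The necessity ($L$ $\mathcal{S}_\mu$-GH $\Rightarrow$ $L_{a_0}$ $\mathcal{S}_\mu$-GH) is already contained in Theorem~\ref{LGHimpliesL_0GH}: since here $c(t) = a(t)$ is real-valued we have $c_0 = a_0$, so the reduced operator $L_0 = D_t + c_0 P$ coincides with $L_{a_0}$.

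For the sufficiency I would argue directly on the Fourier side, following the scheme of Theorem~\ref{variable-case}. Assume $a_0$ is not a $\mu$-exponential Liouville number and take $u \in \mathscr{U}_\mu$ with $Lu = f \in \mathscr{F}_\mu$; as usual we may assume $f \in \mathcal{S}_{\sigma,\mu}$. The non-Liouville hypothesis forces the set $\mathcal{Z} = \{j : a_0\lambda_j \in \Z\}$ to be finite, since otherwise the left-hand side of \eqref{non-exp-Liou} would vanish along a subsequence. Thus, apart from finitely many indices—which contribute a finite sum trivially lying in $\mathcal{S}_{\sigma,\mu}$—Proposition~\ref{solutions} gives each coefficient $u_j$ explicitly by \eqref{Solu-1} (or \eqref{Solu-2}), with $c = a$ real. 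The conceptual reason that $a_0$ is the only obstruction is that the substitution $u_j(t) = \exp\left(-i\lambda_j \int_0^t (a(r)-a_0)\,dr\right) w_j(t)$, whose phase is $2\pi$-periodic and Gevrey of order $\sigma$ because $\int_0^{2\pi}(a-a_0)\,dr = 0$, conjugates $L$ into $L_{a_0}$; I will instead extract the same information quantitatively from the solution formula.

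The heart of the matter is the estimate of $\partial_t^\gamma u_j$. Since $c = a$ is real, the oscillatory factor $\mathcal{H}(t,s) = \exp\left(-i\lambda_j \int_{t-s}^t a(r)\,dr\right)$ has modulus one; in contrast with Theorem~\ref{variable-case}, where the sign condition on $b$ produced the damping $\exp\left(\lambda_j\int_{t-s}^t b(r)\,dr\right) < 1$, here all the decay must come from two combined ingredients. First, by Lemma~\ref{lt2} the condition \eqref{non-exp-Liou} yields, for every $\delta > 0$, a constant $C_\delta$ with $\Theta_j = |1 - e^{-2\pi i a_0\lambda_j}|^{-1} \leq C_\delta \exp\left(\delta j^{\frac{1}{2n\mu}}\right)$, and likewise for $\Gamma_j$. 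Second, running the Leibniz and Fa\`a di Bruno computation exactly as in \eqref{sgrunt}, the $\ell$-th derivative of $\mathcal{H}$ produces powers $\lambda_j^k \sim j^{km/2n}$ with $k \leq \ell \leq \gamma$, which Lemma~\ref{lemma-exp-j} converts via \eqref{puff} into $C_{\epsilon_0}^k (k!)^{m\mu}\exp\left(\tfrac{\epsilon_0}{4} j^{\frac{1}{2n\mu}}\right)$, at the cost of the Gevrey index $\max\{\sigma, m\mu\}$. Choosing $\delta = \epsilon_0/4$, where $\exp\left(-\epsilon_0 j^{\frac{1}{2n\mu}}\right)$ is the decay of $f_j$ guaranteed by $f \in \mathcal{S}_{\sigma,\mu}$, the growth $\Theta_j \exp\left(\tfrac{\epsilon_0}{4}j^{\frac{1}{2n\mu}}\right)$ is dominated by the decay of $f_j$, leaving a net factor $\exp\left(-\tfrac{\epsilon_0}{2} j^{\frac{1}{2n\mu}}\right)$. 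This gives
$$
|\partial_t^\gamma u_j(t)| \leq C^{\gamma+1}(\gamma!)^{\max\{\sigma, m\mu\}} \exp\left(-\frac{\epsilon_0}{2}\, j^{\frac{1}{2n\mu}}\right),
$$
so that $u \in \mathcal{S}_{\max\{\sigma, m\mu\}, \mu} \subset \mathscr{F}_\mu$ by Theorem~\ref{charGSspaces}, whence $L$ is $\mathcal{S}_\mu$-GH.

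The main obstacle is precisely this balancing. Unlike the damped case of Theorem~\ref{variable-case}, one has no exponential factor below one to spare, so the only source of decay is the decay rate $\epsilon_0$ of $f$; the argument succeeds exactly because the non-Liouville bound on $\Theta_j$ is sub-exponential in $j^{\frac{1}{2n\mu}}$ with an \emph{arbitrarily small} rate $\delta$, which can be tuned below $\epsilon_0$, while the $\lambda_j$-powers coming from differentiating the modulus-one phase are absorbed into the enlarged Gevrey index $\max\{\sigma,m\mu\}$ rather than into the decay. This also explains the loss of regularity in $t$ recorded in Remark~\ref{remark-loss-variable}.
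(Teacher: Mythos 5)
Your proposal is correct and follows essentially the same route as the paper: necessity via Theorem~\ref{LGHimpliesL_0GH}, the ``that is'' clause via Theorem~\ref{main_const}, and sufficiency by rerunning the Leibniz/Fa\`a di Bruno estimate of Theorem~\ref{variable-case} with the damping factor replaced by the sub-exponential lower bound on $|1-e^{-2\pi i a_0\lambda_j}|$ supplied by Lemma~\ref{lt2}, tuning the small parameter against the decay rate of $f_j$. The bookkeeping of exponents ($\epsilon_0$, $\epsilon_0/4$, net $\epsilon_0/2$) and the resulting class $\mathcal{S}_{\max\{\sigma,m\mu\},\mu}$ match the paper's argument.
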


\begin{proof} By Theorem \ref{LGHimpliesL_0GH} it is sufficient to prove that if $L_{a_0}$ is  $\mathcal{S}_{\mu}$-GH, then $L$ is $\mathcal{S}_{\mu}$-GH.
Now, if $L_{a_0}$ is $\mathcal{S}_{\mu}$-GH then from Theorem \ref{main_const} we have that $a_0$ is not a $\mu$-exponential Liouville number with respect to  $\{\lambda_j\}$. 
Moreover, the set $\mathcal{Z}=\{j \in \N; a_0 \lambda_j \in \Z \}$  is finite and, for any given $f \in \mathcal{F}_\mu$,  the solutions of the equations
$$
\partial_t u_j(t) + ia(t)\lambda_ju_j(t) = f_j(t)
$$
are given by 
\begin{align*}
	u_j(t) = \frac{1}{1 - e^{-2 \pi i \lambda_j a_0}} \int_{0}^{2\pi}\exp\left(- i\lambda_j\int_{t-s}^{t}a(r) \, dr\right) f_j(t-s)ds,
\end{align*}
for $j$ large enough.

By a similar approach as in the proof of Theorem \ref{variable-case} we obtain that there exists $\epsilon_0 >0$ and $C>0$ such that
$$
|\partial_t^{\gamma} u_j(t)|
\leq C^{\gamma + 1}  (\gamma!)^{\max\{\sigma, m\mu\}} \dfrac{1}{|1-e^{-2 \pi i \lambda_j a_0} |}   \exp\left(-\frac{\epsilon_0}{2} j^{\frac{1}{2n\mu}}
\right).
$$
Finally, since $a_0$ is not a $\mu$-exponential Liouville number, 
it follows from Definition \ref{def-non-exp-Liou} and Lemma \ref{lt2}, for $\delta = \epsilon_0/4$ and $\eta = 2n\mu$, that 
$$
|\partial_t^{\gamma} u_j(t)|
\leq C^{\gamma + 1}  (\gamma!)^{\max\{\sigma, m\mu\}}  \exp\left(-\frac{\epsilon_0}{4} j^{\frac{1}{2n\mu}}
\right),
$$
for a new constant $C>0$. Hence, $u \in \mathcal{S}_{ \textrm{max}\{\sigma, m\mu\},\mu}.$
 
\end{proof}

\subsubsection{Change of sign condition}
To conclude the proof of Theorem \ref{main-variable} it remains to prove that if $b \not \equiv 0$ and $b$ changes sign, then $L$ is not $\mathcal{S}_{\mu}$-GH. So, let us investigate the effect of a change of sign condition on  $b(t)$, namely, by admitting the existence of  $t^+$ and $t^{-}$ such that
$$
b(t^+)>0 \ \textrm{ and } \ b(t^-)<0.
$$

To do this we need the following: for each $\eta \in [0,2\pi]$, let  $\mathcal{B}_{\eta}: [0,2\pi] \to \R$ defined by
$$
\mathcal{B}_{\eta}(t) = \int_{\eta}^{t}b(s)ds.
$$

\begin{lemma}\label{lamma-chang.sig}
	Let $b$ be a smooth real $2\pi$-periodic function on $\R$, such that $b \not \equiv 0$ on any interval. Then, the following properties are equivalent:
	\begin{enumerate}
		\item[a)] $b$ changes sign;
		
		\item[b)] there exists $t_0\in \R$ and $t^*, t_* \in ]t_0, t_0+2\pi[$ such that
		\begin{align*}
			\mathcal{B}_{t^*}(t) & \leqslant   0,  \ \forall t \in \   ]t_0,  t_0 + 2\pi], and      \\[2mm]
			\mathcal{B}_{t_*}(t) & \geqslant   0,  \ \forall t \in \ ]t_0,t_0 + 2\pi[;
		\end{align*}
		
		\item[c)] there exists $t_0 \in \R$, partitions
		\begin{align*}
			& t_0 < \alpha^* < \gamma^* < t^* <\delta^* <\beta^* < t_0 +2\pi,  \\[1mm]
			& t_0 < \alpha_* < \gamma_* < t_* <\delta_* <\beta_* < t_0 +2\pi,
		\end{align*}
		and  positive constants $c^*, c_*$ such that the following estimates hold
		\begin{align}
			& \max_{t\in [\alpha^*, \gamma^*] \bigcup [\delta^*,\beta^*] } \mathcal{B}_{t^*} (t)  < -c^*, and  \label{ch-sign-max1b}\\[1mm]
			& \min_{t\in [\alpha_*, \gamma_*] \bigcup [\delta_*,\beta_*] } \mathcal{B}_{t_*} (t)  >  c_*.   \label{ch-sign-min1b}
		\end{align}
	\end{enumerate}
\end{lemma}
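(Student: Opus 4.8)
The plan is to establish the cycle of implications (a) $\Rightarrow$ (b) $\Rightarrow$ (c) $\Rightarrow$ (a). Throughout I fix a primitive $B$ of $b$ on $\R$, so that $\mathcal{B}_{\eta}(t) = B(t) - B(\eta)$ and $B(t+2\pi) - B(t) = 2\pi b_0$, where $b_0 = (2\pi)^{-1}\int_0^{2\pi} b(s)\,ds$. The reformulation to keep in mind is that $\mathcal{B}_{t^*}(t) \leq 0$ on $]t_0, t_0+2\pi]$ says exactly that $t^*$ is a point of global maximum of $B$ on $[t_0,t_0+2\pi]$, while $\mathcal{B}_{t_*}(t) \geq 0$ says $t_*$ is a point of global minimum; thus (b) asserts precisely that $B$ attains both its maximum and its minimum over one closed period at interior points.

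For (a) $\Rightarrow$ (b), assume $b$ changes sign; by continuity both $\{b>0\}$ and $\{b<0\}$ contain open intervals. The key is to choose the base point $t_0$ so that the drift produced by $b_0$ cooperates rather than pushing the extrema onto the endpoints. I would take $t_0$ in the interior of a maximal interval on which $b<0$ when $b_0 \geq 0$, and on which $b>0$ when $b_0<0$ (the relevant interval exists since $b$ attains both signs). In either case $B$ is strictly monotone near both $t_0$ and $t_0+2\pi$ (these being congruent mod $2\pi$), and combining this local monotonicity with the drift identity $B(t_0+2\pi) = B(t_0) + 2\pi b_0$ one checks directly that neither endpoint can be a global maximizer nor a global minimizer of $B$ on $[t_0,t_0+2\pi]$. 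Hence the maximizer $t^*$ and the minimizer $t_*$ both lie in $]t_0, t_0+2\pi[$, and since $B$ is non-constant (as $b\not\equiv 0$) they are distinct, which is exactly (b).

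For (b) $\Rightarrow$ (c), I would upgrade the non-strict extremality to the quantitative estimates \eqref{ch-sign-max1b}--\eqref{ch-sign-min1b}. Because $b\not\equiv 0$ on any interval, $B$ has no plateau, so on each side of the interior maximizer $t^*$ there are points at which $B<B(t^*)$ strictly; enclosing one such point on the left in $[\alpha^*,\gamma^*]$ and one on the right in $[\delta^*,\beta^*]$, both contained in $]t_0,t_0+2\pi[$ and disjoint from $t^*$, continuity and compactness furnish a constant $c^*>0$ with $\max \mathcal{B}_{t^*} < -c^*$ on their union. The symmetric construction around $t_*$ yields $[\alpha_*,\gamma_*]$, $[\delta_*,\beta_*]$ and $c_*>0$ with $\min \mathcal{B}_{t_*}>c_*$, giving (c).

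Finally, (c) $\Rightarrow$ (a) is immediate: by \eqref{ch-sign-max1b} there is $t>t^*$ with $\int_{t^*}^t b(s)\,ds = \mathcal{B}_{t^*}(t)<0$, impossible if $b\geq 0$, so $b$ is negative somewhere; likewise \eqref{ch-sign-min1b} forces $b$ to be positive somewhere, whence $b$ changes sign. I expect the only genuine difficulty to be the step (a) $\Rightarrow$ (b): the naive choice of $t^*,t_*$ as an adjacent local max/min pair fails when $b_0\neq 0$, since the drift can make the oscillation of $B$ across a single hump smaller than $2\pi|b_0|$, so that one extremum is always forced to an endpoint; the remedy of anchoring the period window inside a sign-phase matched to the drift is what makes both extrema interior simultaneously. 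The plateau-exclusion in (b) $\Rightarrow$ (c) is the only other place where the hypothesis that $b\not\equiv 0$ on any interval is genuinely used.
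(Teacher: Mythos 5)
Your proof is correct. Note that the paper itself does not prove this lemma --- it only points to Lemma 5.10 of \cite{AvGrKi} --- so your argument is a self-contained substitute rather than a variant of an in-paper proof. The structure is sound: recasting (b) as the assertion that a primitive $B$ of $b$ attains both its maximum and its minimum over one period at interior points is the right viewpoint, and your anchoring device (place $t_0$ inside an open interval where $b<0$ if $b_0\ge 0$, where $b>0$ if $b_0<0$, so that strict monotonicity of $B$ near the two congruent endpoints together with $B(t_0+2\pi)=B(t_0)+2\pi b_0$ rules out endpoint extremizers) correctly resolves the drift obstruction you identify; you should just spell out that when $b_0=0$ the exclusion of, say, $t_0$ as a maximizer uses the monotonicity near the \emph{other} endpoint $t_0+2\pi$, but this is exactly what your phrase ``combining local monotonicity with the drift identity'' amounts to. The step (b)$\Rightarrow$(c) properly isolates where the hypothesis that $b\not\equiv0$ on any interval enters (no plateaus of $B$, hence points with $B<B(t^*)$ strictly on both sides of $t^*$, which continuity and compactness convert into the uniform constants $c^*,c_*$), and (c)$\Rightarrow$(a) is immediate as you say. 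What your route buys is independence from the external reference; what the bare citation buys the authors is brevity.
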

\begin{proof}
See   Lemma 5.10 in \cite{AvGrKi}.

\end{proof}

\begin{theorem}\label{change_sign}
Suppose that $b \in \mathcal{G}^{\sigma}(\T)$ is not identically zero on any interval in $[0,2\pi]$. If $b$ changes sign, then $L$ is not $\mathcal{S}_{\mu}$-globally hypoelliptic for any $\mu \geq \frac12$.
\end{theorem}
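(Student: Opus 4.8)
The plan is to prove the contrapositive by producing a \emph{singular solution}: a pair $u \in \mathscr{U}_\mu(\T\times\R^n)\setminus\mathscr{F}_\mu(\T\times\R^n)$ and $f \in \mathscr{F}_\mu(\T\times\R^n)$ with $iLu=f$. Exactly as in Proposition \ref{Z_finite} and Theorem \ref{LGHimpliesL_0GH}, such a pair is built coefficientwise: I will define periodic functions $u_{j_\ell}(t)$ along a subsequence and set $f_{j_\ell}=\partial_t u_{j_\ell}+i\lambda_{j_\ell}c(t)u_{j_\ell}$, so that $u=\sum_\ell u_{j_\ell}\varphi_{j_\ell}$ and $f=\sum_\ell f_{j_\ell}\varphi_{j_\ell}$ satisfy $iLu=f$ term by term. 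Since $|\lambda_j|\to\infty$ by the Weyl formula \eqref{weyl}, I first pass to a subsequence $\{j_\ell\}$ along which the $\lambda_{j_\ell}$ all carry the same sign; I treat the case $\lambda_{j_\ell}\to+\infty$, the other being symmetric after interchanging the roles of $t^*$ and $t_*$.

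The construction exploits the geometry furnished by Lemma \ref{lamma-chang.sig}. Using the point $t^*$ of item b), for which $\mathcal{B}_{t^*}(t)\le 0$ for all $t$ with $\mathcal{B}_{t^*}(t^*)=0$, I consider the homogeneous solution based at $t^*$,
$$
h_{j_\ell}(t)=\exp\left(-i\lambda_{j_\ell}\int_{t^*}^{t}c(r)\,dr\right),\qquad |h_{j_\ell}(t)|=\exp\left(\lambda_{j_\ell}\mathcal{B}_{t^*}(t)\right)\le 1 .
$$
Since $h_{j_\ell}$ itself need not be periodic, I multiply it by a Gevrey cutoff $\psi\in\mathcal{G}^{\sigma}_{c}$ (which exists because $\sigma>1$) chosen with $\psi\equiv 1$ on $[\gamma^*,\delta^*]\ni t^*$ and $\mathrm{supp}\,\psi\subset[\alpha^*,\beta^*]\subset(t_0,t_0+2\pi)$, so that $\mathrm{supp}\,\psi'\subset[\alpha^*,\gamma^*]\cup[\delta^*,\beta^*]$. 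Setting $u_{j_\ell}:=\psi\,h_{j_\ell}$, a genuine $2\pi$-periodic smooth function, a direct computation gives $f_{j_\ell}=\psi'\,h_{j_\ell}$, because $h_{j_\ell}$ annihilates the homogeneous part.

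The key point is that $\psi'$ is supported precisely where $\mathcal{B}_{t^*}(t)<-c^*$ by \eqref{ch-sign-max1b}, so that $|h_{j_\ell}(t)|\le\exp(-\lambda_{j_\ell}c^*)$ on $\mathrm{supp}\,\psi'$; hence $f_{j_\ell}$ and all its $t$-derivatives are suppressed by $\exp(-\lambda_{j_\ell}c^*)$. Because $\lambda_{j_\ell}\sim Cj_\ell^{m/2n}$ and $m\mu\ge 1$, this dominates the required decay $\exp(-\epsilon j_\ell^{1/2n\mu})$; carrying out the Fa\`a di Bruno estimate exactly as in the proof of Theorem \ref{variable-case} absorbs the powers of $\lambda_{j_\ell}$ produced by $\partial_t^\gamma h_{j_\ell}$ into a Gevrey factor $(\gamma!)^{\max\{\sigma,m\mu\}}$ and yields $f\in\mathcal{S}_{\max\{\sigma,m\mu\},\mu}\subset\mathscr{F}_\mu$. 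On the other hand $\sup_t|u_{j_\ell}(t)|\ge |u_{j_\ell}(t^*)|=|\psi(t^*)|\,|h_{j_\ell}(t^*)|=1$, so $u_{j_\ell}$ violates the decay \eqref{deccoeff} and $u\notin\mathscr{F}_\mu$; at the same time $|u_{j_\ell}|\le 1$ gives the uniform bound $|\langle u_{j_\ell},\psi\rangle|\le C\|\psi\|_{\sigma,h}$, which is \eqref{estimete-distr}, whence $u\in\mathscr{U}_\mu$ by Theorem \ref{theorm-seq}. As $iLu=f$, this contradicts $\mathcal{S}_\mu$-global hypoellipticity.

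The main obstacle is the Gevrey-in-$t$ estimate for $f_{j_\ell}$: one must control $\partial_t^\gamma(\psi' h_{j_\ell})$, where differentiating $h_{j_\ell}$ generates factors up to $\lambda_{j_\ell}^\gamma$, and show that the factor $\exp(-\lambda_{j_\ell}c^*)$ available on $\mathrm{supp}\,\psi'$ simultaneously converts these into an admissible growth $(\gamma!)^{\max\{\sigma,m\mu\}}$ and leaves a residual exponential decay in $j_\ell$. This balancing is precisely the one already performed in Theorem \ref{variable-case} through Lemma \ref{lemma-exp-j}, so the technical content is reusable; the genuinely new ingredient is the localization of $\psi'$ to the strictly negative region $\{\mathcal{B}_{t^*}<-c^*\}$ supplied by the change-of-sign Lemma \ref{lamma-chang.sig}, which is exactly what decouples ``the solution keeps size one at $t^*$'' from ``the forcing is exponentially small''.
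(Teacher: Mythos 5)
Your proposal is correct and follows essentially the same route as the paper's proof: the same homogeneous solution $\exp(-i\lambda_{j}\int_{t^*}^{t}c)$ localized by a Gevrey cutoff whose derivative is supported in $I^*=[\alpha^*,\gamma^*]\cup[\delta^*,\beta^*]$ where $\mathcal{B}_{t^*}<-c^*$ (Lemma \ref{lamma-chang.sig}), the same sign-splitting of the $\lambda_j$ with $t_*$ handling the negative case, and the same balancing of $\lambda_j^\gamma$ against $\exp(-c^*\lambda_j)$ via Lemma \ref{lemma-exp-j} and \eqref{weyl} using $m\mu\ge1$. The only cosmetic difference is that you use a single cutoff where the paper uses the pair $g^*,\psi^*$; since $\psi^*\equiv1$ on $\mathrm{supp}\,g^*$ this changes nothing.
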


\begin{proof}The proof is a variant of the proof of Theorem 5.9 in \cite{AvGrKi}.
With the same notation of Lemma \ref{lamma-chang.sig}, set the intervals
\begin{equation*}
	I^* \doteq [\alpha^*, \gamma^*] \cup [\delta^*,\beta^*] \ \ \textrm{ and } \ \
	I_* \doteq [\alpha_*, \gamma_*] \cup [\delta_*,\beta_*],
\end{equation*}
and choose $g^*,g_*, \psi^*,\psi_* \in \mathcal{G}^{\sigma}(\T)$ such that
\begin{align*}
	&  \mbox{supp}(\psi^*) \subset [0,2\pi] \ \mbox{ and } \ \psi^*|_{[\alpha^*, \beta^*]}\equiv 1, \\[2mm]
	&  \mbox{supp}(g^*) \subset [\alpha^*, \beta^*]  \ \mbox{ and } \ g^*|_{[\gamma^*,\delta^*]}\equiv 1,
\end{align*}
and
\begin{align*}
	&  \mbox{supp}(\psi_*) \subset [0,2\pi] \ \mbox{ and } \ \psi_*|_{[\alpha_*, \beta_*]}\equiv 1, \\[2mm]
	&  \mbox{supp}(g_*) \subset [\alpha_*, \beta_*]  \ \mbox{ and } \ g_*|_{[\gamma_*,\delta_*]}\equiv 1.
\end{align*}	
Since $|\lambda_j| \to \infty$, by possibly  passing to a subsequence, we may assume that $\lambda_j>0$, for all $j$, or  $\lambda_j<0$, for all $j$. Let us start with the first case $\lambda_{j}>0$ and define $\{u_j\} \subset \mathcal{G}^{\sigma}(\T)$ by
\begin{equation*}
	u_j(t) = g^*(t) \exp\left[\lambda_j \psi^*(t) (\mathcal{B}_{t^*}(t) - i A_{t^*}(t))\right],
\end{equation*}
where $\mathcal{A}_{\eta}(t) = \int_{\eta}^{t}a(s)ds.$ Then, if $t \in \mbox{supp}(g^*)$ we get
\begin{equation*}
	u_j(t) =
	g^*(t) \exp \left[ \lambda_j(\mathcal{B}_{t^*}(t) - i  A_{t^*}(t))\right],
\end{equation*}
and  $e^{\lambda_j \mathcal{B}_{t^*}(t)}\leqslant 1$, since  $\mathcal{B}_{t^*}(t) \leqslant 0$ on $I^*$.

Therefore, for any $\beta \in \N$ and $t \in \mbox{supp}(g^*)$ we obtain \eqref{estimete-distr}.  Since $|u_{j}(t^*)| = 1$, for any $j$, we have
\begin{equation}\label{u-singular-superlog}
	\{u_{j}(t)\}  \rightsquigarrow u \in  \mathscr{U}_\mu(\T \times \R^n)\setminus \mathscr{F}_\mu(\T \times \R^n),
\end{equation}

Next, consider the sequence 
\begin{equation*}
	f_j(t) = 	-i {g^{*}}'(t) \exp\left[\lambda_j\psi^*(t)(\mathcal{B}_{t^*}(t) - i  A_{t^*}(t))\right].
\end{equation*}

Note that $supp(f_{j}) \subset I^*$, for any $ \in \N$, hence
\begin{align*}
	\left| \partial_t^{k} f_{j}(t) \right| & \leq 
	\sum_{s=0}^{k}\binom{k}{s}\left|\partial_t^{k-s+1}\big(g^*(t)\big)\right| \ \left| \partial_t^{s}\left(\exp \left[ \lambda_j(\mathcal{B}_{t^*}(t) - i  A_{t^*}(t))\right]\right) \right| \\ 
	& \leq C_1^{k+1}\sum_{s=0}^{k}\binom{k}{s} (k!)^{\sigma} j^{km/2n}		\exp(\lambda_j \mathcal{B}_{t^*}(t)) \\
	& \leq C_2^{k+1} (k!)^{\sigma}j^{km/2n}	\exp(\lambda_j \mathcal{B}_{t^*}(t)) \\
	& \leq   C_2^{k+1} (k!)^{\sigma}j^{km/2n} \exp(-c^* \lambda_j) \\ 
	& \leq   C_2^{k+1} (k!)^{\sigma}j^{km/2n} \exp(-\kappa j^{\frac{m}{2n}}),
\end{align*}
for some positive constant $\kappa$, in view of \eqref{weyl}.

Now, by Lemma \ref{lemma-exp-j}, we obtain $C = C(\kappa)$ so that
\begin{align*}
	\left| \partial_t^{k} f_{j}(t) \right| 
	& \leq   C^{k+1} (k!)^{\sigma+1} \exp\left(-\frac{\kappa}{2}j^{\frac{1}{ 2\frac{n}{m}}}\right) \\
	& \leq   C^{k+1} (k!)^{\sigma+1} \exp\left(-\frac{\kappa}{2}j^{\frac{1}{ 2n\mu}}\right),
\end{align*}
where the last inequality is a consequence of the fact that $\mu \geq 1/2$ and $m \geq 2$ imply $\mu\geq 1/m$. Therefore, 
$$
\{f_{j}(t)\}  \rightsquigarrow f \in   \mathscr{F}_{\mu}(\T \times \R^n), \ \mu \geq 1/2,
$$
implying that $L$ is not $\mathcal{S}_{\mu}$-GH, since $Lu=f$.

Finally, we point out that in case $\lambda_j<0$ we can proceed as before by defining the sequences
\begin{equation*}
	u_j(t) = g_*(t) \exp\left[\lambda_j \psi_*(t) (\mathcal{B}_{t_*}(t) - i A_{t_*}(t))\right]
\end{equation*}
and
\begin{equation*}
	f_j(t) = 	-i {g_{*}}'(t) \exp\left[\lambda_j\psi_*(t)(\mathcal{B}_{t_*}(t) - i  A_{t_*}(t))\right].
\end{equation*}

\end{proof}

\begin{remark}
We remark that Theorem \ref{change_sign} can be extended to the following case: there exist an interval $[t_0,t_1] \subset [0,2\pi]$ and $\delta >0$ such that
\begin{align*}
b(t) & > 0, \ \forall t \in(t_0 -\delta, t_0), \\
b(t) & = 0, \  \forall t \in [t_0,t_1], \\
b(t) & < 0, \ \forall t \in(t_1, t_1 + \delta).
\end{align*}

Indeed, in this case we may consider cutoff functions $g_0$ and $g_1$ such that
\begin{align*}
	&  \mbox{supp}(g_0) \subset [t_0-\epsilon,t_0+\epsilon] \ \mbox{ and } \ g_0|_{[t_0-\epsilon/2,t_0+\epsilon/2]}\equiv 1, \\[2mm]
	&  \mbox{supp}(g_1) \subset [t_1-\epsilon,t_1+\epsilon]  \ \mbox{ and } \ g_1|_{[t_1-\epsilon/2,t_1+\epsilon/2]}\equiv 1,
\end{align*}
for $\epsilon$ sufficiently small. Also, we set
\begin{align*}
B_0(t) & = \int_{t_0}^{t} b(s)ds, \ t \in \mbox{supp}(g_0), \\
B_1(t) & = \int_{t_1}^{t} b(s)ds, \ t \in \mbox{supp}(g_1).
\end{align*}

Therefore, the sequence
\begin{equation*}
	u_j(t) =
	g_0(t) \exp \left[ \lambda_j(B_{0}(t) - i  A(t))\right] +
	g_1(t) \exp \left[ \lambda_j(B_{1}(t) - i  A(t))\right],
\end{equation*}
where $A(t) = \int_{0}^{t}a(s)ds$, satisfies
$\{u_{j}(t)\}  \rightsquigarrow u \in  \mathscr{U}_\mu \setminus \mathscr{F}_\mu$ and $Lu \in \mathscr{F}_\mu$.

\end{remark}

\end{document}